\newcommand{\ti}[1]{\tilde{#1}}
\newcommand{\ul}[1]{\underline{#1}}
\newcommand{\R}{\mathbb{R}}
\newcommand{\C}{\mathbb{C}}
\newcommand{\D}{\mathbb{D}}
\newcommand{\Z}{\mathbb{Z}}
\newcommand{\Q}{\mathbb{Q}}
\newcommand{\po}{\partial}
\newcommand{\al}{\alpha}
\newcommand{\Om}{\Omega}
\newcommand{\vp}{\varphi}
\newcommand{\uta}{{\underline{a}}}
\newcommand{\N}{\mathbb{N}}
\newcommand{\F}{{\mathcal{F}}}
\newcommand{\mi}{\mathrm{i}}
\newcommand{\ve}{\varepsilon}
\newcommand{\te}{\theta}
\newcommand{\LL}{{\mathcal{L}}}
\newcommand{\und}{\underline}
\newcommand{\ute}{\underline{\theta}}
\newcommand{\lv}{\left\lvert}
\newcommand{\rv}{\right\rvert}
\newcommand{\lV}{\left\lVert}
\newcommand{\rV}{\right\rVert}
\theoremstyle{plain}
\newtheorem{theorem}{Theorem}[section]
\theoremstyle{plain}
\newtheorem{corollary}[theorem]{Corollary}
\theoremstyle{definition}
\newtheorem{definition}[theorem]{Definition}
\theoremstyle{remark}
\newtheorem{remark}[theorem]{Remark}
\theoremstyle{plain}
\newtheorem{lemma}[theorem]{Lemma}
\title[persistent tangencies for homoclinic bifurcations of automorphisms of $\C^2$]{Stable intersections of Cantor sets and positive density of persistent tangencies for homoclinic bifurcations of automorphisms of $\C^2$}
\author{Hugo Araújo and Carlos Gustavo Moreira}
\begin{document}
 
\maketitle

\begin{small}
ABSTRACT. Let $\{f_\mu\}_{\mu \in \mathbb{D}}$ be a family of automorphisms of $\C^2$ unfolding a generic homoclinic tangency associated to a fixed point $p$ belonging to a horseshoe. We prove that if the linearized versions of the Cantor sets representing the local intersections of the stable and unstable manifolds of $p$ with the horseshoe have stable intersections, then the set of parameters $\mu$ corresponding to automorphisms with persistent tangencies has positive density at $\mu = 0$.

\end{small}

\section{Introduction}
 In the study of smooth dynamical systems, hyperbolicity is a property related to stability and codification of the dynamics. One manner in which a system fails to be hyperbolic is by having homoclinic tangencies. Although homoclinic tangencies have been found to be behind many sophisticated behaviors, their understanding is far from complete. 
 
 In the holomorphic setting, even less is known about them. The goal of this paper is to improve the understanding of the subject through the study of families unfolding homoclinic tangencies. We are interested in the frequency with which the dynamics is non-hyperbolic in this type of family.
 
One of the pioneers in their study was Newhouse. In \cite{n_1}, considering a particular homoclinic tangency, he proved the existence of open sets of $C^2$ diffeomorphisms of a compact surface having persistent homoclinic tangencies, implying the existence of robustly non-hyperbolic dynamics. In \cite{n_2}, he showed that the coexistence of infinitely many sinks is a generic property in these open sets of persistent tangencies, the so-called Newhouse phenomenon. Later, Newhouse proved that this occurs near any diffeomorphism having a homoclinic tangency (\cite{n_3}). Therefore, non-hyperbolicity and infinitely many sinks were established to be non-negligible behaviors from the topological viewpoint.

A crucial part of his strategy was to associate tangent intersections between the stable and unstable laminations of a horseshoe with intersections between Cantor sets. For this reason, criteria for intersections between Cantor sets have been linked to the study of two-dimensional dynamics since the 70s.

From a measure theoretical point of view, one can study parameterized families of diffeomorphisms. In \cite{palistakens}, Palis and Takens considered families $\{f_\mu\}_{\mu \in (-1,1)}$ of surface diffeomorphisms that ``unfold'' a homoclinic tangency for $f_0$. These families represent a path in the space of diffeomorphisms coming ($\mu < 0$) from the region of hyperbolicity and passing through a bifurcation at the value $\mu=0$. In their work, it was shown that the fractal dimension of the horseshoe to which the homoclinic tangency was associated played a role in the frequency with which the systems $f_\mu$ are hyperbolic for $\mu >0$: if its Hausdorff dimension is less than $1$, then the parameters corresponding to hyperbolicity have full Lebesgue density at $0$.

Some years later, Moreira (\cite{tesegugu}) established that if certain Cantor sets constructed from the dynamics have stable intersections, then the set of parameters $\mu$ for which the diffeomorphism $f_\mu$ has persistent tangencies has positive inferior density at $\mu = 0$. Thus, under this hypothesis, persistent tangencies are not negligible from the measure theoretical point of view as well. This result was later improved by him and Yoccoz in \cite{my2}, where the same conclusion was obtained under the hypothesis that the Hausdorff dimension of the horseshoe is larger than $1$. 

 In the last years, there have been various efforts on extending important results on the dynamics of diffeomorphisms of real manifolds to the holomorphic context. This paper aims at extending results about homoclinic bifurcations to the context of automorphisms of $\C^2$.

An automorphism of $\C^2$ is a holomorphic map $H: \C^2 \to \C^2$ with a holomorphic inverse $H^{-1}: \C^2 \to \C^2$. We denote the space of such automorphisms by $Aut(\C^2)$. As shown in \cite{Lempert1992}, this space has a non-trivial structure. We equip this space with the compact-open topology.

Some results related to homoclinic tangencies and the Newhouse phenomenon in the context of automorphisms of $\C^2$ are already known. In \cite{Buzz}, Buzzard extends the results from Newhouse's works \cite{n_1} and \cite{n_2} to this setting, leaving open whether the associated phenomena occur near any homoclinic tangency. Recently, Avila, Lyubich and Zhang announced that this indeed holds.

The existence of families unfolding generic tangencies inside the complex Hénon family was shown by Fornaess and Gavosto in \cite{forngav}. Gavosto also investigated the presence of the Newhouse phenomenon in these families in her work \cite{gavosto}. Recently, Biebler (\cite{biebler}) has developed a complex version of the criterion used by Newhouse to study intersections of Cantor sets in the real line.

At the same time, the work \cite{DujardinLyu2015} of Dujardin and Lyubich shows that in certain parameterized families $\{f_\lambda\} _{\lambda \in \Lambda}$ ($\Lambda$ is a complex manifold) of polynomial automorphisms of $\C^2$ there is a dense subset of the bifurcation parameters corresponding to homoclinic tangencies. This kind of bifurcation refers to the change of type of periodic orbits: attracting, saddle, repelling, indifferent. It is important to observe that this concept of bifurcation is related to concepts of stability and hyperbolicity, as proved in \cite{berger}.

In this work, we also consider families $\{f_\mu\}_{\mu \in \D}$ of automorphisms unfolding a generic tangency, as done in \cite{forngav} and \cite{gavosto}. The main result of this paper is very inspired by theorem I.2 of \cite{tesegugu}. We show that if some Cantor sets associated to a generic homoclinic tangency have stable intersections, then the set of parameters $\mu \in \D$ for which there are persistent tangencies between the stable and unstable laminations of the continuation of the horseshoe has positive inferior density at $\mu = 0$. This means that, under this hypothesis, persistent tangencies are a relevant behavior close to a homoclinic tangency from a measurable point of view as well.

More precisely, let $X \subset \D$ be a subset of parameters. We say that $X$ has positive inferior density at $\mu = 0$ when
\[ \liminf_{\rho\rightarrow 0} \dfrac{m(X\cap \rho \cdot \mathbb{D})}{m(\rho \cdot \mathbb{D})}>0, \]
where $m$ denotes the Lebesgue measure on the disk. We prove the following theorem.

 \begin{theorem}\label{main}
Let $\{H_\mu\}_{\mu \in \mathbb{D}}$ be a family of automorphisms of $\C^2$ unfolding a generic homoclinic tangency associated to a horseshoe $\Lambda_0$ of $H_0$. Let $H_\mu$ be the continuation of the horseshoe $H_0$ and $W^s(\Lambda_\mu)$ and $W^u(\Lambda_\mu)$ be its stable lamination and its unstable lamination respectively. Also, let $k^{\ul{\theta}^u}({K}_0^u$) and $k^{\ul{\theta}^s}({K}_0^s)$ represent the linearized versions of the unstable and stable Cantor sets associated to the horseshoe of $H_0$.

There exists a $\zeta \in  \C^*$ such that if
$$ (k^{\ul{\theta}^u}({K}_0^u), \, \zeta(k^{\ul{\theta}^s}({K}_0^s)) + \nu )$$
is a pair of configurations having stable intersections for some $\nu \in \C$, then the set of persistent tangencies, 
 \[
    C_{pers} = \{ \mu \in \D,\; H_\mu \text{ has persistent tangencies between } W^s(\Lambda_\mu) \text{ and } W^u(\Lambda_\mu)  \}
    \]

has positive inferior density at $\mu = 0$.

\end{theorem}

For precise definitions of the terms in the theorem stated we refer to reader to section \ref{sec2} and the beginning of section \ref{sec3}. Our analysis of Cantor sets in this context is based on our previous work \cite{araujo_moreira_2023.1}. There we obtained results relating Cantor sets to horseshoes and a criterion for stable intersection different from the one developed by Biebler in \cite{biebler}.

We also would like to highlight that, under generic conditions on the eigenvalues of $DH_0$ at the fixed point associated to the homoclinic tangency, the condition appearing in the statement of Theorem \ref{main} can be changed to a weaker one. See remark \ref{final} for details. 

The text is organized as follows. In section \ref{sec2} we present the results available in the literature that are relevant to our work. Subsection \ref{confcant} is dedicated to results appearing in \cite{araujo_moreira_2023.1} related to the structure of conformal Cantor sets and has pointers to the location of the original statements. In subsection \ref{sec2.1}, we focus on the presentation of some key concepts from hyperbolic dynamics and important properties of stable and unstable foliations near a horseshoe. In section \ref{sec3}, we describe the characteristics of parameterized families unfolding a generic homoclinic tangency and prove some necessary results about their geometry. In section \ref{sec4} we prove Theorem \ref{main} and remark that, under appropriate conditions, the hypothesis of Theorem \ref{main} can be considerably weakened. In the \hyperref[appendix]{Appendix} we address the hypothesis of Theorem \ref{main}, referring to examples and conditions under which the linearized versions of the Cantor sets have stable intersection, and showing that the unfolding of a generic homoclinic tangency occurs arbitrarily close to any homoclinic tangency. It has come to our attention that Dujardin has established a stronger version of the result we prove in this paper, for families of dissipative polynomial automorphisms of constant dynamical degree in which a homoclinic tangency is not persistent. See Theorem A.2 of \cite{dujardin2023}.

 \section{Preliminaries}\label{sec2}

In this section, we cover some definitions and results needed for the statement and proof of Theorem \ref{main}. For further details on them, we refer the reader to \cite{araujo_moreira_2023.1}.

\subsection{On conformal Cantor sets in the complex plane}  \label{confcant} \hfill\\

First, we remember that a $C^m$ \emph{regular Cantor set} on $\C$, also called \emph{dynamically defined Cantor set}, is given by the following data.

\begin{itemize}

\item A finite set $\mathbb{A}$ of letters and a set $B \subset \mathbb{A} \times \mathbb{A} $ of admissible pairs.

\item For each $a\in \mathbb{A}$ a compact connected set $G(a)\subset \mathbb{C}$.
 
\item A $C^{m}$ map $g: V \to \mathbb{C}$, for $m>1$, defined on an open neighborhood $V$ of $\bigsqcup_{a\in \mathbb{A}}G(a)$.
\end{itemize}

Notice that we do not require the map $g$ to be holomorphic, and so it can be considered as a $C^m$ map from an open subset of $\R^2$ to $\R^2$. These data must verify the following assumptions:

\begin{itemize}
    
\item The sets $G(a)$, $a \in \mathbb{A}$, are pairwise disjoint.

\item $(a,b)\in B $ implies $G(b) \subset g(G(a))$, otherwise $G(b) \cap g(G(a)) = \emptyset$.

\item For each $a \in \mathbb{A}$, the restriction $g|_{G(a)}$ can be extended to a $C^{m}$ diffeomorphism from an open neighborhood of $G(a)$ onto its image such that $m(Dg) > \mu$ for some constant $\mu > 1$, where $m(A) := \displaystyle{\inf_{v \neq 0}\frac{\lvert Av \rvert}{\lvert v \rvert}}$ is the minimum norm of the linear operator $A$ on $\mathbb{R}^2$.

\item The subshift $(\Sigma, \sigma)$ induced by $B$, called the type of the Cantor set, 
\begin{align*}
    \Sigma & =\{(a_0, a_1, a_2, \dots  ) \in \mathbb{A}^{\mathbb{N}}:(a_i,a_{i+1}) \in B, \forall i \geq 0\}\\
    \sigma & (a_0,a_1,a_2, \dots)  = (a_1,a_2,a_3, \dots),
\end{align*}
is topologically mixing.

\end{itemize}

Once we have all these data we can define a Cantor set (i.e. a totally disconnected, perfect compact set) on the complex plane: 
\[ K=\bigcap_{n \geq 0} g^{-n}\left(  \bigsqcup_{a \in \mathbb{A}} G(a) \right). \] 

\begin{definition}
    We say that a regular Cantor set $K \subset \C$ is conformal if, for all $x \in K$, the derivative of $g$ at $x$, denoted by $Dg(x) : \R^2 \to \R^2$, is a conformal linear operator. 
\end{definition}

For brevity, we will call these sets ``conformal Cantor sets'' in this paper, instead of conformal dynamically defined (or regular) Cantor sets. Notice that a conformal Cantor set $K$ can be described in multiple ways as a Cantor set constructed from the objects above. In particular, one may enlarge the sets $G(a)$ so that they are the closure of open sets if necessary (this is described is right before Lemma 2.1 of \cite{araujo_moreira_2023.1}).

In this spirit, whenever we refer to a conformal Cantor set $K$ we assume that one particular set of data as above has been fixed. We may as well refer to the Cantor set defined by the map $g$, since all the data can be inferred if we know $g$.

\begin{definition}{(The space $\Om^{m}_{\Sigma}$)} \label{topo} For a fixed symbolic space $\Sigma$ and real number $m>1$, the set of all $C^m$ conformal regular Cantor sets $K$ with the type $\Sigma$ is defined as the set of all $C^m$ conformal Cantor sets described as above whose set of data includes the alphabet $\mathbb{A}$ and the set  $B$ of admissible pairs used in the construction of $\Sigma$. We denote it by $\Om^{m}_{\Sigma}$.

\end{definition} 

The topology in  $\Om^{m}_{\Sigma}$ is generated by a basis of neighborhoods $U_{K,\delta} \subset \Om^{m}_{\Sigma}$ where $ K $ is any $C^{m}$ Cantor set in $ \Om^m_{\Sigma}$ and $ \delta  $ is larger than $0$. The neighborhood $U_{K,\delta}$ is the set of all $C^m$ conformal regular Cantor sets $K'$ given by $g': V' \to \C, \, V' \supset \bigsqcup_{a \in \mathbb{A}} G'(a)$ such that $G(a) \subset V_{\delta}(G'(a))$, $G'(a) \subset V_{\delta}(G(a))$ (that is, the pieces are close in the Hausdorff topology) and the restrictions of $g'$ and $g$ to $V \cap V'$  are $\delta$ close in the $C^{m}$ metric. 

To study the combinatorial and geometrical properties of these sets, we will need to consider the following.
\begin{align*}
    \Sigma^{fin} & = \{(a_0, \dots ,a_n): (a_i,a_{i+1}) \in B \ \forall i ,\, 0 \leq i < n \}, \\
\Sigma^- & = \{(\dots, a_{-n}, a_{-n+1},\dots,a_{-1},a_0): (a_{i-1},a_i) \in B \ \forall i \leq 0\}.
\end{align*} 

For $\uta=(a_0, a_1, \dots , a_n) \in \Sigma^{fin}$, we say that it has length $n$ and define:
\[G(\uta)= \{x \in \bigsqcup_{a \in \mathbb{A}} G(a) , \; g^j(x) \in G(a_j), \;j=0,1,\dots, n \}.
\]
These sets are called \emph{pieces} of the Cantor set $K$. The function $f_{\uta}: G(a_n) \to G(\uta)$ is defined by:

\[ f_{\uta} =  g|^{-1}_{G(a_0)} \circ g|^{-1}_{G(a_1)} \circ \dots \circ (g|^{-1}_{G(a_{n-1})})|_{G(a_n)} . \]

The diameters of the pieces $G(\uta)$ decay exponentially with the length of $\uta$, according to Lemma 2.1 of \cite{araujo_moreira_2023.1}:
\begin{lemma} \label{lemma: decay}
Let $K$ be a conformal Cantor set and $G(\uta)$ the sets defined above. There exists a constant $C>0$ such that:

$$diam(G(\uta)) < C\mu^{-n}.$$

\end{lemma}

Given $\uta=(a_0, \dots, a_n)$, $\und{b}=(b_0, \dots , b_m)$, $\ute^1=(\dots,\theta_{-2}^1,\theta_{-1}^1,\theta_{0}^1)$, $\und{\theta}^2=(\dots,\theta^2_{-2},\theta^2_{-1},\theta^2_{0})$, 
we write:
\begin{itemize}
\item if $a_n=b_0$, $\und{ab}=(a_0, \dots,a_{n}, b_1, \dots, b_m)$;
\item $\und{\theta}^1_n = (\theta^1_{-n}, \dots, \theta^1_{-1},\theta^1_0)$;
\item if $\theta^1_0=a_0$, $\ute \uta = (\dots, \theta^1_{-2},\theta^1_{-1}, a_0, \dots, a_n )$;
\item if $\und{\theta}^1 \neq \und{\theta}^2$ and $\theta ^1 _0 = \theta ^2_ 0$,   $\und{\theta}^1 \wedge \und{\theta}^2=(\theta_{-j}, \theta_{-j+1}, \dots , \theta_0)$, in which $\theta _{-i} =\theta^1 _{-i}= \theta^2 _{-i} $ for all $i=0, \dots, j $ and $\theta^1_{-j-1} \neq \theta^2_{-j-1}$;
\end{itemize}
and define the distance between $\ute^1$ and $\ute^2$ by $d(\ute^1,\ute^2)=diam(G(\ute^1\wedge \ute^2))$. Notice that $f_{\uta}\circ f_{\und{b}} = f_{\und{ab}}$. 

For each $a \in \mathbb{A}$, fix a point $c_a\in K(a) \coloneqq K \cap G(a)$. For any $\ul{\theta} \in \Sigma^-$ and $n \ge 1$, define $\Phi_{\ute_n}$ as the unique map in 
\[
Aff(\C) \coloneqq \{\alpha z + \beta,\ \alpha \in \C^*,\,\beta \in \C\}
\]
such that 
\[
\left(\Phi_{\ute_n} \circ f_{\ute_n}\right) (c_{\te_0}) = 0 \qquad \text{and} \qquad D\left(\Phi_{\ute_n} \circ f_{\ute_n}\right) (c_{\te_0}) = Id.
\]

The existence of $\Phi_{\ute_n}$ is a consequence of the conformality of $g$ over $K$. Consider then the maps
\[
k^{\ute}_n \coloneqq \Phi_{\ute_n} \circ f_{\ute_n}.
\]
They correspond to \emph{normalized} versions of small pieces of the Cantor set $K$. More importantly, when $n \rightarrow +\infty$, these maps converge to maps $ k^{\und{\theta}}: G(\theta_0) \to \C $ called \emph{limit geometries} of $K$, as shown in Lemma 3.1 of \cite{araujo_moreira_2023.1}.

\begin{lemma}\label{lemma:limgeo} (Limit Geometries) For each $\und{\theta} \in \Sigma^-$ the sequence of $C^{m}$ embeddings $k^{\und{\theta}}_n: G(\theta_0) \to \C $ converges in the $C^{m}$ topology to a $C^m$ embedding $ k^{\und{\theta}}: G(\theta_0) \to \C $. The convergence is uniform over all $\und{\theta} \in \Sigma^-$ and in a small neighborhood of $g$ in $\Om^{m}_{\Sigma}$. 
\end{lemma}

\begin{remark}
The limit geometries allow us to control the sets $G(\ute_n)$, which can be thought of as approximations of small parts of the Cantor set $K$. Also by Lemma 3.1 of \cite{araujo_moreira_2023.1}, the limit geometries $k^{\ute}$ depend continuously in $\ute$ and the Cantor set $K$, and the derivative $Dk^{\ute}(x)$ is conformal for all $x\in K(\theta_0)$. Moreover, as pointed out just after Corollary 3.2 of \cite{araujo_moreira_2023.1}, changing the base points $c_a \in K(a)$ only changes the limit geometries by a composition on the left by a bounded affine map.
\end{remark} 

\begin{remark}\label{eq1}
    Remark 3.7 of \cite{araujo_moreira_2023.1} affirms that given $\uta \in \Sigma^{fin}$ and $\ute \in \Sigma^-$ such that $\te_0 = a_0$ then there is $F^{\ute, \uta} \in Aff(\C)$ such that
\begin{equation*}
    k^{\ute}\circ f_{\uta} = F^{\ute, \uta} \circ k^{\ute\uta}.
\end{equation*}
\end{remark}

To describe the transversal geometry of the stable and unstable foliations of a horseshoe near to a homoclinic tangency, we need to consider configurations of Cantor sets, as in Definition 3.1 of \cite{araujo_moreira_2023.1}.

\begin{definition}
 A $C^m$-\emph{configuration} of a piece $G(a)$ of a Cantor set is a $C^m$, $m > 1$, diffeomorphism
\[
h: G(a) \to U \subset \C.\]
The space of all $C^m$ configurations of a piece $G(a)$ is denoted by $\mathcal{P}^m(a)$ and we equip it with the $C^{m}$ topology.

\end{definition}

Observe that a limit geometry is a configuration of a piece. 
The concept of stable intersection is defined below.

\begin{definition}\label{stableint}(Stable Intersections)
     Given a pair of Cantor sets $(K,K')$, we say that a pair of configurations $h:G(a) \to \C$ and $h':G'(a') \to \C$ has \emph{stable intersection} when
     \begin{itemize}
         \item  for any Cantor sets $\tilde{K}$ sufficiently close to $K$ and $\tilde{K'}$ sufficiently close to $K'$;
         \item and for any $\tilde{h}$ sufficiently close to $h$ in $\mathcal{P}^m(a)$ and $\tilde{h'}$ sufficiently close to $h'$ in $\mathcal{P}^m(a')$;
     \end{itemize}
 the intersection between $\tilde{h}(\tilde{K}(a))$ and $\tilde{h}'(\tilde{K}'(a'))$ is not empty.
\end{definition} 

Notice that if $(h,h')$ is a pair of configurations with stable intersection and 
\[A \in Aff(\R^2) \coloneqq \{A(v) = Bv + v_0, \, B \in GL(\R^2), \; v, v_0 \in \R^2\}, \] then $(A \circ h, A \circ h')$ also has stable intersection. This leads us to consider, for any given $h \in \mathcal{P}(a)$, the map $A_h \in Aff(\R^2)$ such that
\[
A_h \circ h (c_{a}) = 0  \qquad \text{ and } \qquad  D(A_h \circ h) (c_{a}) = Id.
\]
In this case, $(A_h \circ h, A_h \circ h')$ still has stable intersection.

Given a finite word $\uta_n$ of size $n$ and a configuration $h: G(a_0) \to \C$ we will write $h_n = h \circ f_{\uta_n}$ in the next lemma. Also under this notation, given $\ute \in \Sigma^-$ with $a_0 = \te_0$, let $\mathfrak{h}_{n}$ be defined by
 \[\mathfrak{h}_{n} \coloneqq A_{h_n} \circ h_n \circ (k^{\ute \uta_n})^{-1}  ,\]
 so that
 \[
A_{h_n} \circ h_n = \mathfrak{h}_n \circ k^{\ute \uta_n}.
 \]
 Notice that by remark \ref{eq1}.
 \[
 \mathfrak{h}_n = A_{h_n} \circ h \circ (k^{\ute})^{-1} \circ F^{\ute, \uta_n}.
 \]

We have the following version of Lemma 3.8 of \cite{araujo_moreira_2023.1}.
\begin{lemma}\label{scale}
Let $K$ be a $C^{1+\alpha}$, $0 < \alpha < 1$, conformal Cantor set and $h \in \mathcal{P}^{1+\al}(a_0)$ a configuration of a piece in $K$. Then, using the notation described above,
\[
||\mathfrak{h}_n -Id  ||_{C^{1+\alpha}} < C \cdot \text{diam}(G(\uta_n))^{\alpha}< C \cdot \mu^{-n\alpha},
\] 
where $C> 0$ depends only on the Cantor set $K$ and the initial configuration $h$.
\end{lemma}

Therefore, the configurations $A_{h_n}\circ h_n$ are close to limit geometries ending in $\uta_n$. In other words, under the action of composition with inverse branches of $g$, configurations are attracted  (exponentially) to compositions of limit geometries and affine maps. This lemma will be very important in the proof of the Theorem \ref{main}, as it will help us find stable intersection between configurations of certain Cantor sets. 

\subsection{Some properties of stable and unstable foliations of horseshoes in $\C^2$ and their relations to conformal Cantor sets.} \label{sec2.1} \hfill\\

The concept of conformal Cantor set was developed having in mind the transversal geometries of the stable and unstable foliations of a horseshoe near a periodic point in the context of automorphisms of $\C^2$. Next, we explain the meaning of these terms. 

Given an automorphism $H \in Aut(\C^2)$ we say that a compact set $\Lambda \subset \C^2$ is hyperbolic when it is hyperbolic in the usual sense: it is invariant ($H(\Lambda) = \Lambda$) and there is an invariant splitting of the tangent bundle (over $\Lambda$)  $T\C^2|\Lambda =E^s\oplus E^u$ such that for some $\lambda>0$ and $C>0$ 
\[\max\{||DH^j|E^s||, ||DH^{-j}|E^u\} \leq C\lambda^j\;\; \forall j \in \N.\]

The stable and unstable manifolds ($W^s(p)$ and $W^u(p)$ respectively) of points $p \in K$ are, in this case, complex manifolds of dimension one immersed on $\C^2$ and their local versions are complex embedded disks. 

A basic set is a locally maximal hyperbolic set, meaning there is an open set $U \supset \Lambda$ such that ${\Lambda = \bigcap_{n \in \mathbb{Z}}H^n(U)}$, also having a transitive orbit. Basic sets have the key property of being stable under small perturbations, that is, if $H'$ is close to $H$ then ${\Lambda'= \bigcap_{n \in \mathbb{Z}}{(H')}^n(U)} $ is also a basic set with $H'|\Lambda'$ being a dynamical system conjugated, via a homeomorphism close to identity, to $H|\Lambda$. We say that a basic set is of saddle type when the bundles $E^s$ and $E^u$ are non-trivial. 

A complex horseshoe is a totally disconnected infinite basic set of saddle type of an automorphism of $\C^2$. This nomenclature has already appeared in the literature and is coherent with the one used by Obsterte-Worth in \cite{ob}. In this work, it was shown that complex horseshoes occur when there is a transversal homoclinic intersection. A homoclinic intersection is a point $q \in W^s(p) \cap W^u(p)$ with $q \neq p$, where $p$ is a periodic hyperbolic point of saddle type. We say that the homoclinic intersection is 
\begin{itemize}
    \item transversal when
\[T_q\C^2 = T_qW^s(p) \oplus T_qW^u(p);\]
\item a tangency when 
\[
T_qW^s(p) = T_qW^u(p).
\]
\end{itemize}
Since the stable and unstable spaces over $q$ are complex lines in $\C^2$, a homoclinic intersection is either transversal or a tangency. The following theorem was present in \cite{araujo_moreira_2023.1} (Theorem 2.5), and establishes a relation between the concept of conformal Cantor set presented in the previous subsection and the structure of the horseshoe near a periodic point.

\begin{theorem}\label{thma}
Let $\Lambda$ be a complex horseshoe for a automorphism $H \in Aut(\C^2)$ and $p$ be a periodic point in $\Lambda$. Then, if $\ve$ is sufficiently small, there are an open set $U \subset \C $, an open set $V \subset W^u_\ve(p)$ containing $p$ and a holomorphic parameterization  $\pi: U \to V$ such that $\pi^{-1}(V \cap \Lambda)$ is a $C^{1+\alpha}$ conformal Cantor set in the complex plane.
\end{theorem}

This theorem is also true when we replace $W^u_\ve(p)$ with $W^s_\ve(p)$. Translating if necessary, we can assume that the parameterizations $\pi^u: U^u \to V^u \subset W^u_\ve(p)$ and $\pi^s: U^s \to V^s \subset W^s_\ve(p)$ are such that $\pi^u(0) = p =\pi^s(0)$. We will write
\[
K^u \coloneqq (\pi^u)^{-1} \left( V^u \cap \Lambda\right) \qquad \text{and} \qquad K^s \coloneqq (\pi^s)^{-1} \left( V^s \cap \Lambda\right) 
\]
for the corresponding Cantor sets.

If $p$ is a periodic point of $H$, then it is a fixed point for some iterate of $H$. So, to simplify things, we are going to consider from now on that $p$ is actually a fixed point of $H$ (except in the statement of  lemma \ref{projections}, which was kept close to the original).

The maps $g^u$ and $g^s$ defining these Cantor sets are constructed respecting the dynamics of the automorphism $H$. This means that near $0$,
\[
g^u = (\pi^u)^{-1} \circ  H \circ \pi^u \qquad \text{and} \qquad g^s = (\pi^s)^{-1} \circ H^{-1}\circ \pi^s.
\]
Check the proof of Theorem 2.5 of \cite{araujo_moreira_2023.1} for details. Consequently, $0$ is a fixed point of $g^u$ and $g^s$.

\begin{remark}\label{eigenv}
    Moreover, if  $G(a^u)$ is the piece containing $0$ in $K^u$, $G(a^s)$ is the piece containing $0$ in $K^s$, and if $\lambda^u$, $\lambda^s$ are the eigenvalues of $DH(p)$, then
    \[
    f_{(a^u,a^u)}(0) = 0 \qquad \text{and} \qquad Df_{(a^u,a^u)}(0) = (\lambda^u)^{-1};
    \]
    \[
    f_{(a^s,a^s)}(0) = 0 \qquad \text{and} \qquad Df_{(a^s,a^s)}(0) = \lambda^s.
    \]
\end{remark}


\begin{remark}\label{contdepen}
    The Cantor sets $K^u$ and $K^s$ depend continuously on the automorphism $H$. This is also a consequence of their construction. 
\end{remark}
 
The linearized versions of the Cantor sets appearing in Theorem \ref{main} can now be defined:

\begin{definition}\label{linv}
    Let $K^u$ and $K^s$ be the Cantor sets associated to the fixed point $p$ of a horseshoe $\Lambda$ of an map $H$. Let $G(a^u)$ be the piece containing $0$ in $K^u$ and $G(a^s)$ be the piece containing $0$ in $K^s$. If we fix $c_{a^u} = 0$ and $c_{a^s} = 0$, then $k^{\ute^u}({K}^u)$ and $k^{\ute^s}({K}^s))$ are the linearized versions of these Cantor sets, where
    \[
    \ute^u = (\dots, a^u, a^u , a^u) \qquad \text{and} \qquad \ute^s = (\dots, a^s, a^s,a^s).
    \]
\end{definition}
    
\begin{definition}\label{persset}
    Given a horseshoe $\Lambda$, the stable and unstable laminations of $\Lambda$ are, respectively, the unions of the stable manifolds and unstable manifolds of the points in $\Lambda$:
\[
W^s(\Lambda) = \bigcup_{x \in \Lambda} W^s(x) \qquad \text{and} \qquad   W^u(\Lambda) = \bigcup_{x \in \Lambda} W^u(x).
\]
We say that an automorphism $H$ of $\C^2$ has persistent tangencies when there are tangent intersections between the laminations $W^s(\Lambda_{\ti{H}})$ and $W^u(\Lambda_{\ti{H}})$ for every $\ti{H} \in Aut(\C^2)$ sufficiently close to $H$.
\end{definition}

 The set $C_{pers}$ in the statement of Theorem \ref{main} corresponds to parameters $\mu$ for which $H_\mu$ has persistent tangencies. Hence, by definition \ref{persset}, the set $C_{pers}$ is open. As mentioned in the introduction, persistent tangencies imply that the dynamics is robustly non-hyperbolic. 

To prove Theorem \ref{thma}, it is necessary to extend these invariant laminations to semi-invariant foliations covering an open set around the horseshoe $\Lambda$. Since the properties of these foliations are also important in this paper, in particular to understand the unfolding of a generic homoclinic tangency, we present them in some detail below.

First of all, their existence is guaranteed by the following theorem, which is a slight variation of a result by Pixton present in \cite{pix} (refinement 3.9 of Theorem 3.4) and almost the same as the version appearing in Buzzard's work \cite{Buzz} (Theorem 5.3).

\begin{theorem} \label{fol}Let $U \subset \mathbb{C}^2$ be open. Let $\Lambda \subseteq U $ be a horseshoe for an injective holomorphism $H_0: U \to M$, with $\Lambda=\displaystyle{\bigcap_{n\in \mathbb{Z}}H_0^n(U)}$ and let $E^s \oplus E^u $ be the associated splitting of $T_{\Lambda}\mathbb{C}^2$.

Suppose that for some $\alpha > 0$ sufficiently small:
\[
\lv DH_0|_{E^s(x)} \rv \cdot  \lv DH_0^{-1}|_{E^u(x)} \rv \cdot \lv  DH_0^{-1}|_{E^s(H_0(x))} \rv^{1+\alpha}< C < 1 
\]
for every $x \in \Lambda$, where $C < 1$ is a constant uniform for all $x$. 

Then, there is a compact set $L$ and $\delta>0$ such that for any holomorphism $H : U \to \mathbb{C}^2$ with $||H-H_0||<\delta$ we can construct a $C^{1+\alpha}$ foliation $\F_H ^u $ defined on an open set $V\subset U$ such that:

\begin{itemize}

\item the horseshoe $\Lambda_H = \displaystyle{\bigcap_{n \in \mathbb{Z}}H^n(U)}$ satisfies $\Lambda_H \subset \text{int } L \subset L \subset \F_H ^u $,
\item if $p \in \Lambda_H $, then the leaf $\mathcal{L}^u_H(p)$ agrees with $W^u_{\text{loc}}(p)$,
\item if $p\in H^{-1}(L)\cap L$, then $H(\mathcal{L}^u_H (p) ) \supseteq \mathcal{L}^u_H (H(p))$, i.e., it is semi-invariant,
\item the tangent space $T_p\mathcal{L}^u_H(p)$ varies $C^{1+\alpha}$ with $p$ and continuously with $H$,
\item The association $H \to \F^u_H$ is continuous on the $C^{1+\alpha}$ topology.

\end{itemize}
\end{theorem}

Of course, by changing $H$ to $H^{-1}$, we guarantee the existence and analogous properties for the stable foliation $\F^s_H$.

\begin{remark}\label{tangsmooth}
    The result above is Theorem 2.2 combined with Remark 2.3 of \cite{araujo_moreira_2023.1}. The main difference between Theorem \ref{fol} and Theorem 5.3 of \cite{Buzz} is the $C^{1+\alpha}$ regularity of the foliations, which is needed for our objectives. The tool used for this purpose was a modified version of the $C^r$-section Theorem as stated in \cite{global}. The $C^{1+\al}$ invariant section obtained in the proof of Theorem 2.2 in the appendix of \cite{araujo_moreira_2023.1} is $(x,\sigma_H(x))$, where $x$ represents points in the open set $U \subset \C^2$ and $\sigma_H(x)$ represents the tangent space of $\LL^u_H(x)$ at the point $x$. This way, not only the foliations are $C^{1+\al}$, but the associations $x \mapsto I^u_H(x)$, where $I^u_H(x)$ represents the inclination of $T_x \LL^u_H(x)$ with respect to some fixed direction, are as well. Moreover, the association $H \mapsto I^u_H$ is continuous in the $C^{1+\al}$ topology.
\end{remark}

\begin{remark}\label{holocur}
    We observe these foliations are not unique; the leaves not corresponding to parts of $W^{s}(p)$ or $W^{u}(p)$ for $p \in \Lambda$ can be chosen in different ways. For further details on their construction, we refer the reader to \cite{pix} and \cite{Buzzthesis}. In this last reference there is also a proof that although the leaves $\mathcal{L}^{s}(x)$ and $\mathcal{L}^{u}(x)$ ($x \in V$) vary only with $C^{1+\alpha}$ regularity, the foliations can be constructed in a way such that each of these leaves are holomorphic curves embedded into $\C^2$. This fact will be crucial in the next section.
\end{remark}

The holonomies along $\F^u_H$ (and $\F^s_H$) between two tranversal sections to it have a very nice property, which is described by Lemma 2.6 of \cite{araujo_moreira_2023.1}:

\begin{lemma}\label{projections}
Let $\Lambda_H$ be a complex horseshoe for an automorphism $H \in Aut(\C^2)$ together with its unstable foliation $\F^u_H$. Additionally, let $N_1$ and $N_2$ be two $C^{1+\alpha}$ transversal sections to $\F^u_H$. Suppose that for some periodic point $p \in \Lambda_H$, the tangent planes of $N_1$ and $N_2$ to the points of intersection $N_1 \cap \mathcal{L}^u_H(p) =q_1$ and respectively $N_2 \cap \mathcal{L}^u_H(p)=q_2$ are complex lines of $\C^2$. Then the projection along unstable leaves $\Pi^u : N_1 \to N_2$ is a $C^{1+\alpha}$ map conformal at $q_1$.
\end{lemma} 

The proof of this result is based on iterating backward the leaf $\mathcal{L}^u(p)$, until the iterates of $N_1$ and $N_2$ are very close to each other. By the inclination lemma, the holonomy between these iterates is very close to being conformal. Because of the semi-invariant properties of the foliation and the holomorphicity of $H$, we can iterate back and forth by $H^n$ to conclude that the original holonomy was also very close to being conformal. Making $n\rightarrow \infty$ yields the result. Again, an analogous result is true for the holonomies along the stable foliation $\F^s_H$.

It has been pointed out to us that Lyubich and Peters use a another argument to prove that these holonomies are in fact $C^{1+\alpha}$, using holomorphic motions, Koebe Distortion Theorem and properties of (quasi)conformal maps. See Lemma 5.3 of \cite{lyubichpeters} for further details. Part of the argument also involves iterating back and forth by the automorphism, as used in the proof of Lemma \ref{projections}.

 \section{The unfolding of a generic homoclinic tangency}\label{sec3}

In this section, we describe what properties characterize a family of automorphisms unfolding a generic homoclinic tangency. We keep them as close as possible to the ones appearing in \cite{forngav} and \cite{gavosto}.

First, by a family of automorphisms $\{H_\mu\}_{\mu \in \D}$ we mean:

\begin{enumerate}[(I)]

    \item \label{itm:I} a $C^r$, $r > 1$, map $H$,
\begin{align*}
    H:\D\times \C^2  & \to \D\times \C^2\\
                (\mu, (z,w))    &  \mapsto  (\mu,H_{\mu}(z,w))  
    \end{align*} 
    such that each $H_\mu$ is an automorphism of $\C^2$.
\end{enumerate}

We are going to assume that the homoclinic tangency is associated to a horseshoe. This is not an unnatural hypothesis, since horseshoes occur when there is a transversal homoclinic tangency (as proved in \cite{ob}), and these can be made by perturbing a homoclinic tangency. By iterating $H$, we may assume that the homoclinic tangency comes from a fixed point, instead of a period one.

\begin{enumerate}[(I)]
\setcounter{enumi}{1}
    \item \label{itm:II} $H_0$ has a fixed saddle point $p=p_0$ belonging to a horseshoe $\Lambda_0$. Also, there is a homoclinic tangency $q$ associated to the saddle point $p$, that is, 
    \[
q \in W^s(p) \cap W^u(p)  \qquad \text{and } \qquad T_qW^s(p) = T_qW^u(p).
    \]
\end{enumerate}

By rescaling the parameter $\mu$ we can assume that all the hyperbolic structures can be continued along $H_\mu $. Thus, we denote by:
\begin{itemize}\setlength\itemsep{.3em}
    \item $p_\mu$ the continuation of the fixed saddle point $p_0$;
    \item $\lambda_\mu^s$ and $\lambda^u_\mu$ are the eigenvalues of $DH_\mu$ at $p_\mu$;
    \item $\Lambda_\mu$ the continuation of the horseshoe $\Lambda_0$;
    \item $W^s_\mu(p)$ and $W^u_\mu(p)$ the continuations of the stable and unstable manifolds;
    \item $K^u_\mu$ and $K^s_\mu$ the continuations of the Cantor sets constructed with Theorem \ref{thma};
    \item $W^s(\Lambda_\mu)$ and $W^u(\Lambda_\mu)$ the stable and unstable laminations of $\Lambda_\mu$;
    \item $\F^s_\mu$ and $\F^u_\mu$ the continuations of the stable and unstable foliations constructed by Theorem \ref{fol}.
\end{itemize}

The points $p_\mu$ depend $C^r$ on $\mu$, and so do the continuations of the unstable and stable manifolds and the foliations. The eigenvalues $\lambda_\mu^u$ and $\lambda^s_\mu$ depend $C^{r-1}$ on $\mu$. The horseshoe and the Cantor sets depend continuously on $\mu$.

For any $r > 0$ and $z_0 \in \C$, let
\[
B_r(z_0)\coloneqq \{z \in \C, \lv z - z_0\rv < r\}.
\]
By iteration, we can extend the foliations $\F^s_\mu$ and $\F^u_\mu$ to a neighborhood of $q$. Maybe after performing a change of coordinates in $GL(\C^2)$, we can assume that in a neighborhood $V$ of the point $q$ the foliations $\F^{u}_\mu$ are described by a $C^{1+\alpha}$ map
\begin{align*}
    \vp^{u} : \D \times B_\delta(0) \times B_{\delta}(0)& \to V \\
                   (\mu,z,w) & \mapsto \vp^{u}(\mu,z,w) = \vp^{u}_{\mu}(z,w).
\end{align*}
such that:
\begin{enumerate}[i)]
    \item for each fixed $\mu \in \D$, the map ${\vp}^u_\mu = {\vp}^u(\mu, \boldsymbol{\cdot})$ is a $C^{1+\alpha}$ diffeomorphism map from $ B_\delta(0) \times B_{\delta}(0)$ to an open subset of $V$ that contains the small open set $Q=B_{\delta'} (q_1) \times B_{\delta'} (q_2)$, where $q=(q_1,q_2)$.
    \item For fixed $\mu$ and $w$, let $\vp^{u}_{\mu,w}(z) \coloneqq \Pi_2 \circ  \vp_{\mu}(z,w)$, where $\Pi_2: \C^2 \to \C$ is the projection onto the second coordinate. For every $\mu \in \D$, 
    \[
    \vp_{\mu}(z,w) = (q_1 + z, \vp^{u}_{\mu,w}(z)).
    \]
    \item For fixed $\mu$ and $w$, the image of $\vp^u_{\mu}(\cdot, w) $ is contained in a leaf $\mathcal{L}^u _\mu$ of the foliation $\F^u_\mu$.
     \item For fixed $\mu$ and $w = 0$, the image of $\vp^u_{\mu}(\cdot, 0) $ is contained in $W^u_\mu(p)$.
    \item For $\mu = 0$, we have
    \[
    \vp^{u}_{0,w}(0) = q_2 + w,
    \]
    and 
    \[
    \frac{\partial \vp^{u}_{0,0}}{\partial z} (0) = 0.
    \]

\end{enumerate}

The maps $\vp^s$, $\vp^s_\mu$ and $\vp^s_{\mu,w}$ are defined analogously. Since $q$ is a point of tangency, we have
\begin{equation}\label{tangencyvp}
    \frac{\partial \vp^{u}_{0,0}}{\partial z} (0) = \frac{\partial \vp^{s}_{0,0}}{\partial z } (0) = 0.
\end{equation}

Moreover, from the conditions above, 
\[
D\vp_0^u(0,0) = Id = D\vp_0^s(0,0).
\]

The homoclinic tangency is said to be generic when

\begin{enumerate}[(I)]
\setcounter{enumi}{2}
    \item \label{itm:III} the contact between $W^u_0(p)$ and $W^s_0(p)$ is quadratic. In other words,
    \[
    \frac{\partial^2 \vp^{u}_{0,0}}{\partial z ^2} (0) \neq \frac{\partial^2 \vp^{s}_{0,0}}{\partial z ^2} (0).
    \]
\end{enumerate}
Notice that these derivatives exist because the leaves are holomorphic (Remark \ref{holocur}).

The final hypothesis on the family is that the stable and unstable manifolds associated to the homoclinic tangency move in a generic fashion according to $\mu$. Consider the function 
\[
c(\mu)=\vp^s_{\mu,0}(0)-\vp^u_{\mu,0}(0).
\]
It can be seen as a real $C^r$ function from $\D\subset \R^2$ to $\R^2$.

\begin{enumerate}[(I)]
\setcounter{enumi}{3}
    \item \label{itm:IV} The generic homoclinic at $q$ is unfolded with ``non-singular speed'':
    \[
    \frac{\partial c} {\partial \mu}(0) = B \in GL(\R^2). 
    \]
\end{enumerate}

\begin{remark}
    The conditions (\ref{itm:I}) - (\ref{itm:III}) are usually considered what determines a generic homoclinic tangency. Meanwhile, condition (\ref{itm:IV}) corresponds to the unfolding of the generic tangency. Notice that all those conditions make reference only to the continuations of $W^u(p)$ and $W^s(p)$ and hence are independent of the non-unique choice on the foliations $\F^u$ and $\F^s$.
\end{remark}

\begin{remark}
    In the works \cite{forngav} and \cite{gavosto}, families satisfying conditions very similar to (\ref{itm:I}) - (\ref{itm:IV}) were denominated ``a generic homoclinic tangency'', without mention to the verb unfold. The work \cite{forngav} shows that a family satisfying conditions (\ref{itm:I}) - (\ref{itm:IV}) exists, in particular, can be chosen within the Hénon family. In \cite{gavosto}, Gavosto proves that, after some change in coordinates, the difference in height between $W^u_\mu(p_\mu)$ and $W^s_\mu(p_\mu)$ near $q$ is equal to the function $z^2 + \mu$, and so the homoclinic tangency is unfolded into two transversal intersections (see proof of Theorem 4.1 there).
    
    Buzzard's work \cite{Buzz} also proves the existence of families unfolding a generic homoclinic tangency through more general constructions. This is not stated directly, but follows from the arguments used in the proof of Corollary 1. In the \hyperref[appendix]{Appendix}, we show that families satisfying conditions (\ref{itm:I}) - (\ref{itm:IV}) can be constructed arbitrarily close to any automorphism $H$ having a homoclinic tangency.
    
\end{remark}

To study tangent intersections between the unstable and stable laminations of $\Lambda_\mu$, we need to fix the part of the neighborhood of the homoclinic tangency $q$ in which they can occur. This is done in the next lemma, which should be familiar to those interested in homoclinic bifurcations.

\begin{lemma}(Disk of tangencies)\label{disktan}
Let $\{H_{\mu}\}_{\mu \in \D}$ be a family of automorphisms of $\C^2$ unfolding a generic homoclinic tangency as described above. 

For every small $\mu \in \D$, there is a $C^{1+\alpha}$ embedded disk $D_\mu \subset V $ such that the leaves $\mathcal{L}^u_\mu(x)$ and $\mathcal{L}^s_\mu(x)$ are tangent to each other at $x$ for every $x \in D_\mu$. Further, the embeddings depend $C^r$ on $\mu$.
\end{lemma}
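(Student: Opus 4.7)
The plan is to realize the disk of tangencies as the zero set of a $C^{1+\e}$ function measuring the difference of complex tangent directions of the two foliations, and then apply the implicit function theorem. For each $\mu$ and $(z,w)\in V$, let $\ti{w}^u_\mu(z,w)$ be the unique leaf label with $h^u_{\mu,\ti{w}^u_\mu(z,w)}(z)=w$, and define the complex slope of the unstable leaf through $(z,w)$ by
\[
\tau^u_\mu(z,w):=\frac{\partial h^u_{\mu,\ti{w}^u_\mu(z,w)}}{\partial z}(z),
\]
with $\tau^s_\mu$ defined analogously from $\mathbbm{h}^s$. By the regularity properties of the foliations listed in the previous subsection, $\tau^u_\mu$ and $\tau^s_\mu$ are $C^{1+\e}$ in $(z,w)$ and $C^r$ in $\mu$. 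The leaves $\mathcal{L}^u_\mu(x)$ and $\mathcal{L}^s_\mu(x)$ are tangent at $x=(z,w)$ precisely when $\Psi(\mu,z,w):=\tau^u_\mu(z,w)-\tau^s_\mu(z,w)=0$, so the goal reduces to showing that the zero locus of $\Psi_\mu$ is a $C^{1+\e}$ embedded disk in $V$ depending $C^r$ on $\mu$.

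The heart of the argument is to verify that the real differential of $\Psi(0,\cdot,\cdot)$ at $q$ is surjective. Perform a preliminary holomorphic change of coordinates around $q$ making the unstable leaf through $q$ the first coordinate axis, so that $h^u_{0,0}(z)\equiv 0$ and $\tau^u_0(z,0)\equiv 0$, and normalize $\mathbbm{h}^s$ so that $h^s_{0,w_0}(0)=w_0$. Implicitly differentiating $h^s_{0,\ti{w}^s_0(z,0)}(z)=0$ at $(z,w_0)=(0,0)$, and using the tangency identity $\partial_z h^s_{0,0}(0)=0$, one obtains $\partial_z \ti{w}^s_0(0,0)=\partial_{\bar z}\ti{w}^s_0(0,0)=0$. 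Chain-ruling through the definition of $\tau^s_0$ then yields $\partial_z\tau^s_0(0,0)=\partial_z^2 h^s_{0,0}(0)$ and $\partial_{\bar z}\tau^s_0(0,0)=0$. Hypothesis (3), together with the normalization $h^u_{0,0}\equiv 0$, gives $\partial_z^2 h^s_{0,0}(0)\neq 0$, so
\[
\partial_z\Psi(0,0,0)=-\partial_z^2 h^s_{0,0}(0)\neq 0, \qquad \partial_{\bar z}\Psi(0,0,0)=0.
\]
Thus the restriction of the real differential of $\Psi(0,\cdot,\cdot)$ to the $z$-plane at $q$ is complex multiplication by a nonzero number, hence an $\R$-linear isomorphism onto $\C\cong\R^2$.

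Since the real differential is already surjective in the $z$-direction alone, the real implicit function theorem produces a unique map $\sigma:\mathbbm{D}\times B_\delta(0)\to\mathbbm{D}$ of class $C^{\min\{r,1+\e\}}$ with $\sigma(0,0)=0$ and $\Psi(\mu,\sigma(\mu,w),w)=0$ for all $(\mu,w)$ near $(0,0)$; the desired disk is then $D_\mu:=\{(\sigma(\mu,w),w):w\in B_\delta(0)\}$. The principal obstacle is the non-degeneracy step: because the foliations are only $C^{1+\e}$ transversally and not holomorphic, one cannot directly invoke complex-analytic IFT, and one must exploit both the coordinate normalizations and the holomorphic-in-$z$ nature of the leaf parametrizations to translate the quadratic tangency hypothesis into the real-rank-two condition required by the (real) implicit function theorem.
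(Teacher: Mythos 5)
Your proposal is correct and follows essentially the same route as the paper: define the difference of the inclinations of the two foliations, use the quadratic tangency hypothesis (3) to show its $z$-derivative at $q$ is an invertible (conformal) real linear map, and apply the real implicit function theorem, with the parametrized version giving the $C^r$ dependence on $\mu$. The only difference is bookkeeping: you normalize coordinates so the unstable leaf through $q$ is $\{w=0\}$ and work with the slope fields in ambient coordinates, whereas the paper compares slopes via the composition $(\mathbbm{h}^u_0)^{-1}\circ \mathbbm{h}^s_0$ and realizes the disk as the image under $\mathbbm{h}^s_0$ of a graph; both hinge on exactly the same non-degeneracy.
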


\begin{proof}
 Consider the map representing the difference between the inclinations of the tangent spaces to the unstable and stable foliations at the point $ x = (z,w) $
 \begin{align*}
  f_\mu(z,w) &  = \left(\frac{\partial  {\vp}^u_{\mu,w}}{\partial z} \circ({\vp}^u_\mu)^{-1} (z,w) \right)
  -  \left( \frac{\partial {\vp}^s_{\mu,w}}{\partial z} \circ ({\vp}^s_\mu)^{-1}\right) (z,w)
  \\
      & = \left( \frac{\partial (\Pi_2\circ {\vp}^u_\mu)}{\partial z} \circ({\vp}^u_\mu)^{-1} \right)(z,w) -\left(\frac{\partial (\Pi_2\circ {\vp}^s_\mu) }{\partial z} \circ ({\vp}^s_\mu)^{-1}\right)(z,w),
 \end{align*}

 where $\Pi_2: \C^2 \to \C$ is the projection onto the second coordinate. By remark \ref{holocur}, the leaves are holomorphic curves and so the functions $\vp^u_{\mu,w}$ and $\vp^s_{\mu,w}$ are holomorphic on $z$. Hence, the derivatives in the definition of $f$ can be seen as $2 \times 2$ conformal matrices or complex numbers. Moreover, by remark \ref{tangsmooth} and identifying $\C$ with $\R^2$, we can consider that each $f_\mu$ is a $C^{1 + \alpha}$ map from an open subset of $\R^4$ to $\R^2$. 
 
 The pre-image $D_0 \coloneqq f_0^{-1}(0)$, correspond to the points near $q$ in which the foliations $\F^s_0$ and $\F^u_0$ are tangent to each other. By hypothesis on $H_0$, the point $q$ belongs to $D_0$. We now show that $\frac{\partial f_0}{\partial z}(q_1,q_2)$ corresponds to a conformal invertible $2 \times 2$ real matrix and so, by the implicit function theorem for real maps, the set $D_0$ is the graph of a $C^{1 + \alpha}$ function of $w$ near $(q_1,q_2)$. Restricting to a small ball around $q$, we may consider that it is a disk.
 
Since
\[
 \vp^{u}_{\mu}(z,w) = (q_1 + z, \vp^{u}_{\mu,w}(z)) \qquad \text{and} \qquad  \vp^{s}_{\mu}(z,w) = (q_1 + z, \vp^{s}_{\mu,w}(z)),
\]
 if we treat $\vp_\mu^u$ and $\vp_\mu^s$ as maps from an open subset of $\R^4$ to $\R^4$, we can represent their derivatives as
 \[
 D\vp^{u}_{\mu}(z,w)  = \begin{pmatrix}
     Id_2 & 0 \\ \frac{\partial }{\partial z} \vp^u_{\mu,w} & \frac{\partial }{\partial w} \vp^u_{\mu,w}
 \end{pmatrix} \qquad \text{and} \qquad  D\vp^{s}_{\mu}(z,w)  = \begin{pmatrix}
     Id_2 & 0 \\ \frac{\partial }{\partial z} \vp^s_{\mu,w} & \frac{\partial }{\partial w} \vp^s_{\mu,w}
 \end{pmatrix} 
 \]
 
 in which each of the entries are $2 \times 2$ real matrices. As $D\vp^u_0(0,0)$ and $D\vp^s_0(0,0)$ are equal to $Id_4$, the matrices above are also close to $ Id_4$ for $(\mu,z,w)$ close to $(0,0,0)$. Using the chain rule, we arrive at:
 \begin{align*}
      \frac{\partial f_0}{\partial z}  (q_1,q_2)  = & \left(\frac{\partial^2(\Pi_2\circ {\vp} ^u_0)}{\partial z^2}(0,0) - \frac{\partial^2 (\Pi_2\circ {\vp} ^s_0)}{\partial z^2}(0,0)  \right)
      \\ 
      & + \frac{\partial}{\partial w}\left(\frac{\partial (\Pi_2\circ {\vp} ^s_0)}{\partial z }\right)(0,0) \cdot \left[\frac{\partial{\vp}^s_{0,0}}{\partial w}(0)\right]^{-1}\cdot \frac{\partial{\vp}^s_{0,0}}{\partial s}(0)
      \\
     & - \frac{\partial}{\partial w}\left(\frac{\partial (\Pi_2\circ {\vp} ^u_0)}{\partial z }\right)(0,0) \cdot \left[\frac{\partial{\vp}^u_{0,0}}{\partial w}(0)\right]^{-1}\cdot \frac{\partial{\vp}^u_{0,0}}{\partial s}(0).
 \end{align*}
 
Notice that these derivatives exist because, by remark \ref{tangsmooth}, the tangent spaces to $\F^u(p) $ and $ \F^s(p)$ vary $C^{1+\al}$ with $p$. Furthermore, each term can be seen as a $2 \times 2$ matrix, again because of remark \ref{holocur}. 

The last two lines of the equation above are equal to zero because of equation (\ref{tangencyvp}). The first line, by condition (\ref{itm:III}) on the family (quadratic tangency) and the fact that each of the leaves is a holomorphic curve, corresponds to the difference between two distinct complex numbers and so is a conformal $2 \times 2$ matrix. We conclude that $\frac{\partial f_0}{\partial z}$ is in fact invertible and so $D_0$ is a $C^{1 + \al}$ embedded disk passing through $q$.
 
 To prove the existence of the disks $D_\mu$ and that they vary $C^r$ with $\mu$, we consider now the map $f: \D \times B_{\delta'}(q_1) \times B_{\delta'}(q_2) \to \C \equiv \R^2$ defined by $f(\mu,z,w) =  f_{\mu}(z,w)$.

Fixing $\mu = 0$, $w = q_2$ and varying only $z$, we conclude that $\frac{\partial f}{\partial z}(0,q_1,q_2) = \frac{\partial f_0}{\partial z}  (q_1,q_2) $ which is an invertible matrix. So the pre-image of $0$ is a $C^r$ manifold, which, close to $(0,0,0)$, is the graph of a $C^r $ function on $\mu$ and $w$. Therefore, for every $\mu$ sufficiently small, the disks $D_\mu$ exist and they vary $C^r$ with $\mu$. To show that each of them is $C^{1+\alpha}$ one needs only to consider the functions $f_\mu(z,w) \coloneqq f(\mu,z,w)$ and observe that, by continuity, one still has $\frac{\partial f_\mu}{\partial z}$ is invertible near $(0,0)$ for small values of $\mu$ and proceed as in the case $\mu = 0$.
\end{proof}

\begin{remark}
   Notice that if the leaves were not constructed as holomorphic curves, we would need to consider $f$ as a map from an open subset of $\R^4$ to $\R^8$, and we would not be able to use the implicit function theorem. 
\end{remark}

Now, following the notation of Theorem \ref{thma}, we can consider local parameterizations
\[
\pi^u_\mu: U^u_\mu \to V^u_\mu \subset W^u_{\mu,\ve}(p_\mu) \qquad \text{ and } \qquad \pi^s_\mu: U^s_\mu \to V^s_\mu \subset W^s_{\mu,\ve}(p_\mu) 
\]
defining the Cantor sets 
\[
K^u_\mu = (\pi^u_\mu)^{-1} \left( V^u_\mu \cap \Lambda_\mu\right) \qquad \text{and} \qquad K^s_\mu = (\pi^s_\mu)^{-1} \left( V^s_\mu \cap \Lambda_\mu\right).
\]
Following the notation of lemma \ref{projections}, let 
\[
\Pi^u_\mu: V^u_\mu \to D_\mu \qquad \text{and} \qquad \Pi^s_\mu: V^s_\mu \to D_\mu
\]
be the projections along $\F^s_\mu$ and  $\F^u_\mu$ respectively. Finally, let $\pi^d_\mu: D_\mu \to \C$ be a parameterization of the disk of tangencies. The following lemma is now evident.

\begin{lemma}\label{evident} For small values of $\mu$, let
   \[
h^u_\mu \coloneqq \pi^d_\mu \circ \Pi^s_\mu \circ \pi^u_\mu \qquad \text{and} \qquad h^s_\mu \coloneqq \pi^d_\mu\circ \Pi^u_\mu \circ \pi^s_\mu .
\] 

If $h^u_\mu(K^u_\mu) \cap h^s_\mu(K^s_\mu) \neq \emptyset$, then there are tangencies between $W^u(\Lambda_\mu)$ and $W^s(\Lambda_\mu)$. 

Additionally, if $h^u_\mu(K^u_\mu)$ and $ h^s_\mu(K^s_\mu)$ have stable intersections, then $\mu$ belongs to the set $C_{pers}$ appearing in the statement of Theorem \ref{main}.
\end{lemma}

To guarantee stable intersections between the configurations of Cantor sets $h^u_\mu(K^u_\mu)$ and $ h^s_\mu(K^s_\mu)$, we need to further develop our understanding of the foliations $\F_\mu^u$ and $\F_\mu^s$. For this purpose, we will need holomorphic linearizations.

\begin{definition}(Holomorphic linearization) Let $H$ be an automorphism of $\C^2$ and $p$ be a fixed saddle point of $H$. Let $\lambda^u$ and $\lambda^s$ be the eigenvalues of $DH(p)$. A holomorphic change of coordinates $\Psi$ near $p$ such that $\Psi(0,0) = p$ and
\[
\Psi^{-1} \circ H \circ \Psi (v^s,v^u) = (\lambda^s\cdot  v^s, \lambda^u \cdot v^u), 
\]
is called a holomorphic linearization of the dynamics near the fixed point $p$.
\end{definition}

In \cite{Bracci2004}, one can find the following theorem, due to Siegel, establishing conditions for the existence of such linearization.
 
\begin{theorem}(Siegel) Let $F$ be a germ of holomorphic diffeomorphism of $\C^n$ fixing $0$ and denote by $\lambda_1, \dots, \lambda_n$ the eigenvalues of $DF(0)$ (that we assume diagonalizable). If there are $ C > 0 $ and $ v \in \N $ such that for all $l=1,\dots,n$ and $m_1, \dots, m_n \in \N$ such that $\sum m_j \geq 2$ it holds
$$ |\lambda_l - \lambda_1^{m_1} \cdots \lambda_n^{m_n} | \geq \frac{C}{(\sum_{j=1}^n m_j)^v}, $$
then $F$ is holomorphically linearizable.
\end{theorem}

\begin{remark}\label{linpertu}
   The condition on the eigenvalues is not generic, however it is of full measure, and so, dense. Hence, given any automorphism $H \in Aut(\C^2)$ and a fixed saddle point $p$, we can perturb $H$ to $A\circ H$, where $A$ is an affine transformation on $\C^2$ fixing $p$ close to the identity, to make the eigenvalues of $D(A\circ H)(p)$ satisfy the linearization conditions of the theorem above. 
\end{remark}

The holomorphic linearization allows us to have good control on how the inclinations of the stable and unstable foliations vary near $W^u(p)$ and $W^s(p)$. 

For this purpose, let $H$ be an automorphism of $\C^2$ with a fixed saddle point $p$ such that the eigenvalues of $DH(p)$ satisfy the condition stated in Siegel's Theorem. Let $p'$ be a point in $W^s(p)$ such that $T_{p'}W^{s}(p)$ is transversal to the $w$-direction.

Further, let $V$ be a neighborhood of $p'$ in which the stable foliation $\F^s$ is defined and $I^s: V \to \C $ be a $C^{1 + \alpha}$ function such that, for every $(z,w) \in V$, the value of $I^s(z,w)$ is the inclination of $T_{(z,w)}\F^s$ with respect to the $z$-direction, that is
\[
T_{(z,w)}\F^s = \{(v, I^s(z,w) \cdot v),\; v \in \C\}.
\]
We can consider that $I^s: V \subset \R^4 \to \R^2$, again because of remark. \ref{holocur}

\begin{lemma}\label{linearizableH}
In the context above, for every point $(z,w)$ belonging to $W^s(p)$, both $\frac{\partial I^s}{\partial z}(z,w)$ and $\frac{\partial I^s}{\partial w}(z,w)$ are $2 \times 2$ conformal matrices.
\end{lemma}

\begin{proof}
Let $\Psi $ be a holomorphic linearization of $H$. By iteration, the domain of $\Psi$ may be extended to some $V' \subset \C^2$ such that:
\begin{itemize}
    \item $V' = R \cdot \D \times \delta\cdot  \D$.
    \item $\Psi(V')$ contains a small neighborhood of $p'$.
\end{itemize}

Moreover, we can consider that the foliation $\F^s$ has been constructed in the region $V$ by iteration from a small neighborhood of $p$. That way, we can consider $\F^s$ to be defined in all of $\Psi(V')$ as well. Let $ \hat{\F}^s \coloneqq \Psi^{-1}(\F^s) $ and $\hat{I}^s(v^s,v^u)$ denote the inclination of $\hat{\F}^s$ in the point $(v^s,v^u)$, that is, 
\[
T_{(v^s, v^u)}\hat{\F}^s = \left\{(v, \hat{I}^s(v^s,v^u) \cdot v), \; v \in \C \right\}.
\]
Since $\Psi^{-1}(W^s(p)  ) = R \cdot \D \times \{0\}$, we have $\hat{I}^s(v^s, 0) = 0$ for all $v^s \in R \cdot \D$. So, $\frac{\partial \hat{I}^s}{\partial v^s}(v^s,0) = 0$. We now show that $\frac{\partial \hat{I}^s}{\partial v^u}(v^s,0) = 0$ for all $v^s \in R \cdot \D$ as well. 

Let $\hat{H} \coloneqq \Psi^{-1} \circ H \circ \Psi $. Notice that $\hat{\F}$ is semi-invariant by $\hat{H}$. Since $\hat{H}(v^s,v^u) = (\lambda^s \cdot v^s, \lambda^u \cdot v^u)$,
\[
T_{\hat{H}(v^s, v^u)}\hat{\F}^s = \left\{(\lambda^s \cdot v, \lambda^u \cdot \hat{I}^s(u,v) \cdot v), \; v \in \C \right\}.
\]
Therefore,
\begin{equation}\label{inclination}
    \hat{I}^s\left((\hat{H}(v^s,v^u)\right) =(\lambda^s)^{-1} \cdot \lambda^u \cdot \hat{I}^s(v^s,v^u). 
\end{equation}

Keep $v^s$ fixed now. If $\delta \cdot (\lambda^u)^{-m-1}  \le \lv v^u_m \rv < \delta \cdot (\lambda^u)^{-m} $, then all iterates 
\[
\hat{H}(v^s,v^u_m), \dots, \hat{H}^m(v^s,v^u_m)
\]
belong to the region $V'$. Meanwhile, there are $C>0 $ and $\delta' > 0$ such that $\lv \hat{I}^s \rv < C$ on $\delta' \cdot \D \times \delta \cdot \D$. If $m$ is sufficiently large, then $\hat{H}^m(v^s,v^u_m)$ belongs to this region, and so 
\[
\lv \hat{I}^s(\hat{H}^m(v^s,v^u_m)) \rv < C
\]
By equation \ref{inclination}, this implies
\[
\lv \hat{I}^s(v^s,v^u_m) \rv < R\cdot (\lambda^s)^{m} \cdot (\lambda^u)^{-m},
\]
hence 
\[
\frac{\lv \hat{I}^s(v^s,v^u) \rv}{\lv v^u_m \rv  } <  (\lambda^s)^m \cdot (\lambda_u)^{-1} 
\]
and making $m \rightarrow  0$ we conclude that
\[
\frac{\partial \hat{I}^s}{\partial v^u}(v^s,0) = \lim_{v^u \rightarrow 0} \frac{\lv \hat{I}^s(v^s,v^u) \rv}{\lv v^u \rv } = 0. 
\]
Returning to the original setting, if $(z,w) = \Psi(v^s,v^u)$ is a point in $V$, then 
\[
T_{(z,w)}\F^s = \left\{\left(\left(
\frac{\partial \Psi_1}{\partial v^s} + \frac{\partial \Psi_1}{\partial w} \cdot \hat{I}^s(v^s,v^u) \right)\cdot v
, \left(\frac{\partial \Psi_2}{\partial v^s} + \frac{\partial \Psi_2}{\partial w}\cdot \hat{I}^s (v^s,v^u) \right)\cdot v\right) ,\; v \in \C \right\},
\]
where $\Psi_j = \Pi_j \circ \Psi$ for $j=1,2$ and all the partial derivatives are calculated at $(v^s,v^u)$. Since $\Psi$ is holomorphic, we can think of them as complex numbers involved in an expression with the complex number $\hat{I}^s(v^s, v^u)$.

By the transversality hypothesis on $W^s(p')$, the partial derivative $\frac{\partial \Psi_1}{\partial v^s}$  in invertible for $(v^s,v^u)$ close to $\Psi^{-1}(p')$. For points $(z,w)$ sufficiently close to $W^s(p')$, the corresponding value of $v^u$ will be close enough to $0$ so that $\hat{I}(v^s,v^u)$ is close to zero and 
\[
\frac{\partial \Psi_1}{\partial v^s}(v^s,v^u) + \frac{\partial \Psi_1}{\partial v^u}(v^s,v^u) \cdot \hat{I}^s(v^s,v^u)
\]
is invertible. Therefore, in a thin strip around a small part of $W^s(p)$ containing $p'$, we have 
\[
I^s = \left(\frac{\partial \Psi_2}{\partial v^s} + \frac{\partial \Psi_2}{\partial v^u}\cdot \hat{I}^s \right) \cdot\left( \frac{\partial \Psi_1}{\partial v^s} + \frac{\partial \Psi_1}{\partial v^u}\cdot \hat{I}^s\right)^{-1} \circ \Psi^{-1}. 
\]
Using the chain rule, the conclusion follows from the fact that 
\[
\frac{\partial \hat{I}^s}{\partial v^s}(v^s,0) = \frac{\partial \hat{I}^s}{\partial v^u}(v^s,0) = 0,
\]
and the fact that $\Psi$ is holomorphic.

\end{proof}

The analogous statement is also true for the unstable foliation and its inclination function $I^u$. Moreover, the map $H$ does not need to be holomorphically linearizable. 

\begin{lemma}\label{generalH}
    The same conclusion of lemma \ref{linearizableH} is true when the condition on the eigenvalues of $DH(p)$ is removed. 
\end{lemma}

\begin{proof}
      For this, let $\ti{H}$ be any automorphism close to $H$. Let $\ti{p}$ be the continuation of the fixed saddle point $p$ and consider the stable foliation $\ti{\F}^s$ defined near the point $\ti{p}$. Also, let  $\ti{I}^s(z,w)$ be the function measuring the inclinations of the tangent spaces $T_{(z,w)}\ti{\F}^s$.
      
    The function $\ti{I}^s$ converges to $I^s$ in the $C^{1+\al}$ topology when $\ti{H} \rightarrow H$, as pointed out in remark \ref{tangsmooth}. Hence, the result follows by choosing the maps $\ti{H}$ to be linearizable, as explained in remark \ref{linpertu}.
\end{proof}

\begin{lemma}\label{linsub}
The tangent space of the disk of tangencies $D_0$ at $ q$ is a complex linear subspace of $\C^2$. 
\end{lemma}

\begin{proof}
The function $I^s(w,z)$ used in lemmas \ref{linearizableH} and \ref{generalH} can be represented in terms of $\vp_0 $ as 
\[
I^s(z,w) = \left( \frac{\partial {\vp}^s_{0,0}}{\partial z} \circ ({\vp}^s_0)^{-1}\right) (z,w).
\]
Analogously,
\[
I^u(z,w) = \left( \frac{\partial {\vp}^u_{0,0}}{\partial z} \circ ({\vp}^u_0)^{-1}\right) (z,w).
\]
Notice that the function $f_0(z,w)$ defined in the beginning of proof of lemma \ref{disktan} then satisfies
\[
f_0 = I^u(z,w) - I^s(z,w).
\]
Hence, by lemmas \ref{linearizableH} and \ref{generalH} and the fact that $q$ belongs to the intersection of $W^u_0(p_0)$ and $W^s_0(p_0)$,
\[
\frac{\po f_0}{\po z}(q_1,q_2) \qquad \text{and} \qquad \frac{\po f_0}{\po w}(q_1,q_2)
\]
are both conformal matrices (this was already known for the first matrix by the end of the proof of lemma \ref{disktan}). 

However, by the implicit function theorem, the tangent space of $D_0$ is the space of vectors 
    \[
T_q D_0 =\left\{\left(v,\frac{\partial f_0}{\partial w}(q_1,q_2) \left[\frac{\partial f_0}{\partial z} (q_1,q_2) \right]^{-1} v\right), \; v \in \R^2\right\},
    \]
 and since the matrices above are conformal, $T_q D_0$ is a complex linear subspace of $\C^2$.
\end{proof}

\begin{corollary}\label{confconfig}
The derivatives of the configurations $h_0^u$ and $h_0^s$ at $\mu = 0$,
\[Dh_0^u(0): \C \to \C \qquad \text{and} \qquad Dh_0^s(0) : \C \to \C, \] are conformal transformations, that is, they correspond to multiplication by a complex constant.
\end{corollary}

\begin{proof}
    Considering the definitions of $h^u_\mu$ and $h^s_\mu$ in lemma \ref{evident}, the conclusion follows from Lemma \ref{linsub} and Lemma \ref{projections} together.
\end{proof}

 \section{Proof of the Theorem \ref{main}}\label{sec4}

 We are going to need the following result, which may be known by some readers.

     \begin{lemma}\label{genericv} Let $ z=R_a\cdot e^{\mi a}$ and  $w=R_b\cdot e^{\mi b};$ be complex numbers with $ a,b,R_a,R_b \in \R$. If
 \[
a\log R_b- b \log R_a, \qquad \pi \log R_a,  \qquad \pi \log R_b
 \]
 are linearly independent over $\Q$, then the set $X=\{u=z^m\cdot w^n\in \C;\, m,n \in \Z\}$ is dense on $\C$. 
 
 Additionally, for each $v \in \C$, $\delta>0$ and $M=M_{v,\,\delta} $ sufficiently large, there is a sequence of integers $(m_i,n_i)_{i \in \N}$ such that, 
 
 \begin{enumerate}[(i)]
     \item $z^{m_i}\cdot w^{n_i} $ belongs to $B_\delta(v)$ for all $i \in \N$.
     \item $0 < m_{i + 1} - m_i < M$ and $|n_{i+1}- n_i| < M$.
 \end{enumerate} 

\end{lemma} 

\begin{proof} Applying logarithms, we need to show that 
\[
X'= \{(m\cdot \log R_a + n \cdot \log R_b, ma + nb+2k\cdot \pi) \in \R^2; m,n,k \in \Z \}
\]
is dense in $\R^2$. By Kronecker's Theorem, we know that if $e_1=(1,0)$, $e_2=(0,1)$ and $v=(c,d)$ are vectors such that $1,\,c,\,d$ are linearly independent over $\Q$, then the set
\[
Y 
=\{ m\cdot e_1 + n \cdot e_2 + k \cdot v; \, m,\,n,\,k \in \Z \}
\]
is dense on $\R^2$. Thus, consider the linear operator 
\[T=\begin{bmatrix}
    \log R_a     &  \log R_b \\ a & b
\end{bmatrix}
\]

and constants 
\[
c=\frac{2\pi \log R_b}{a\log R_b - b \log R_a}\quad \text{and} \quad d= - \frac{2\pi \log R_a}{a\log R_b - b \log R_a}.
\] 
Under these choices, 
\[
(m\cdot \log R_a + n \cdot \log R_b, ma+nb+2k\cdot \pi) = T( m\cdot e_1 + n \cdot e_2 + k \cdot v),
\] 
so $X'=T(Y)$ is dense if $a\log R_b- b \log R_a$, $\pi \log R_a$ and  $\pi \log R_b$ are linearly independent over $\Q$.
 
Suppose now that we are under this hypothesis and $z^{m_i}\cdot w^{n_i} \in B_{\delta}(v)$ for a fixed $v$. There are $\ve>0 $ and $v'$ close to $1$ such that, for every $ u \in B_{\ve}(v')$, the point $u\cdot z^{m_i}\cdot w^{n_i} $ belongs to $ B_{\delta}(v)$ ($v'$ depends on the value of $z^{m_i}\cdot w^{n_i} \in B_{\delta}(v)$).

Since $X$ is dense in $\R^2$, there are $0 < N < 3N <  M/2$ and $s_1,t_1,s_2,t_2$ such that:

\begin{itemize}\setlength\itemsep{.5em}
    \item $|s_1| < N$ and $|t_1|< N$;
    \item $ u_1 = z^{s_1}w^{t_1}$ satisfies $|u_1 - v'| < \frac{\ve}{4}$;
    \item $3N < |s_2| < M/2$ and $ 3N < |t_2|< M/2$;
    \item $ u_2 = z^{s_2}w^{t_2}$ satisfies $|u_2 - 1| < \frac{\ve}{4|v'|}$;
\end{itemize}

and thus, if $\ve$ is sufficiently small, $u_1u_2 $ and $u_1u_2^{-1}$ belong to $B_{\ve}(v')$ as desired. If $s_2 > 0$, then 
\[
u_1u_2 = z^{s_1 + s_2} \cdot w^{t_1 + t_2} = z^{m'}\cdot w^{n'}
\]
with $0 < m' <  N$ and $|n'| < M$ and so we can choose $m_{i+1} = m_i + m'$ and $n_{i+1} = n_i + n'$. If $s_2 < 0$, then 
\[
u_1u_2^{-1} = z^{s_1 - s_2} \cdot w^{t_1 - t_2} = z^{m'}\cdot w^{n'}
\]
also with $0 < m' <  N$ and $|n'| < M$ and so we can choose $m_{i+1} = m_i + m'$ and $n_{i+1} = n_i + n'$ as well.

The sequence is constructed repeating this process.

\end{proof}

Notice that the conditions of lemma \ref{genericv} on the complex numbers $z$ and $w$ are generic. Conclusions $(i)$ and $(ii)$ also hold for many more complex numbers $z$ and $w$ using $v = 1$ instead.

\begin{corollary}\label{coro}
    Let $z = R_ae^{ia}$ and $w = re^{ib}$ be complex numbers such that $\lv z\rv \neq 1$ and $\lv w \rv \neq 1$. Then, for each $\delta>0$ and $M=M_{\delta} $ sufficiently large, there is a sequence of integers $(m_i,n_i)_{i \in \N}$ such that, 
 
 \begin{enumerate}[(i)]
     \item $z^{m_i}\cdot w^{n_i} $ belongs to $B_\delta(1)$ for all $i$.
     \item $0 < m_{i + 1} - m_i < M$ and $|n_{i+1}- n_i| < M$.
 \end{enumerate} 
\end{corollary}

\begin{proof}
We remember that, by Kronecker's Theorem, given any $\eta \in \R$ and $\ve > 0$, there exist integers $p$ and $q$ such that $ \lv q - p\eta \rv < \ve $. Proceeding similarly as in the proof of Lemma \ref{genericv}, we can conclude that given any pair of real numbers $\eta_1 \neq 0$ and $\eta_2 \neq 0$, there exists $M_\ve>0$ and a sequence $(m_i,n_i)_{i \in \N}$ such that $0 < m_{i + 1} - m_i < M_\ve $, $\lv n_{i+1} - n_i \rv < M_\ve$ and $ \lv m_i\eta_1 + n_i\eta_2 \rv < \ve $.

Going back to the statement of our lemma, we divide into two cases.

I) $ a\log R_b - b \log R_a = 0$.

Since $\log R_b \neq 0$ and $\log R_a \neq 0$, there exists $M_\ve>0$ and a sequence $(m_i,n_i)$ such that $0 < m_{i + 1} - m_i < M_\ve $, $\lv n_{i+1} - n_i \rv < M_\ve$ and $ \lv m_i\log R_a + n_i \log R_b \rv < \ve\cdot \min{\{1,\log R_b\}}$
Then 
\[
\lv m_i a + n_i b \rv = \lv \frac{b}{\log R_b} (m_i\log R_a + n_i \log R_b ) \rv < \lv b \rv \cdot \ve
\]
and, shrinking $\ve$ and enlarging $M_\ve$ if necessarily, we get the desired sequence.

II) $a\log R_b - b \log R_a \neq 0$.

Proceeding as in the proof of lemma \ref{genericv}, it is sufficient to prove that if $1$, $c \neq 0$ and $d \neq 0$ are real numbers, then, for any $\ve>0$, there are $M_\ve > 0$ and a sequence of integers $(m_i,n_i,k_i)$ such that $0 < m_{i + 1} - m_i < M_\ve $, $\lv n_{i+1} - n_i \rv < M_\ve$, $ \lv m_i + k_i c \rv < \ve $ and $\lv n_i + k_i d \rv < \ve $. Because of lemma \ref{genericv}, we need only to consider the case in which $1$, $c$ and $d$ are rationally dependent. Therefore, there are integers $\gamma_1$, $\gamma_2$ and $\gamma_3$, not all zero, such that
\[
\gamma_1 + \gamma_2 c  + \gamma_3 d = 0.
\]
We can assume without loss of generality that $\gamma_3 \neq 0$. Given $\ve > 0$, let $M_\ve$ and $(\ti{m}_i,\ti{k}_i)$ be such that $0 < \ti{m}_{i + 1} - \ti{m}_i < M_\ve $, $\lv \ti{k}_{i+1} - \ti{k}_i \rv < M_\ve$ and $ \lv \ti{m}_i + \ti{k}_i c \rv < \ve$. 

Since $d = \frac{-\gamma_1 - \gamma_2 c}{\gamma_3}$, making $m_i = \gamma_3 \ti{m}_i$, $n_i =  \gamma_1 \ti{k}_i - \gamma_2 \ti{m}_i$ and $k_i = \gamma_3 \ti{k}_i$, we conclude that
\begin{align*}
    \lv m_i + k_i c \rv   & < \gamma_3 \ve\\
    \lv n_i + k_i d \rv  = \lv - \gamma_2 \ti{m}_i - \gamma_2 \ti{k}_i c\rv  &<  \gamma_2 \ve.
\end{align*}

Shrinking $\ve$ and enlarging $M_\ve$, we get a sequence $(m_i,n_i)$ as desired.

\end{proof}

\begin{proof}(Of theorem \ref{main})
    Because of lemma \ref{evident}, we need to show that the set of values $\mu$ for which $h^u_\mu(K^u_\mu)$ and $ h^s_\mu(K^s_\mu)$ have stable intersections has positive inferior density at $\mu = 0$.

    Let $g_\mu^u$ and $g_\mu^s$ be the maps defining the Cantor sets. Following the notation of subsection \ref{confcant} and Definition \ref{linv}, let $\ute^u = (\dots, a^u, a^u, a^u)$ and $\ute^s = (\dots, a^s, a^s, a^s)$ be the infinite words such that 
    \[
    f^u_{\mu, (a^u,a^u)}(0) = 0 \qquad \text{and} \qquad f^s_{\mu,( a^s,a^s)}(0) = 0.
 \]
Notice they do not depend on $\mu$. 

The idea is to show that for many values of $\mu$ and suitable integer values of $m > 0$ and $n$ there are stable intersections between 
\[
h^u_\mu(K^u_\mu \cap G(\ute^u_{m}))  \qquad \text{and} \qquad h^s_\mu(K^s_\mu \cap G(\ute^s_{n})).
\]
To simplify the notation, we will write 
\begin{align*}
    f^u_m \coloneqq f^u_{\ute^u_{m}} & \qquad \text{and} \qquad f^s_n \coloneqq f^s_{\ute^s_{n}}.
\end{align*}

Because of the invariance of the Cantor sets, we can rewrite
\begin{align*}
    h^u_\mu(K^u_\mu \cap G(\ute^u_m)) & = (h^u_\mu \circ f^u_{m}) (K^u_\mu \cap G(a^u)), \\
    h^s_\mu(K^s_\mu \cap G(\ute^s_n)) & = (h^s_\mu \circ f^s_{n}) (K^s_\mu \cap G(a^s)) .
\end{align*}

We go further and denote $h^u_{\mu,m} \coloneqq h^u_\mu \circ f^s_{n} $ and $h^s_{\mu,n} \coloneqq h^s_\mu \circ f^s_{n} $.

The hypothesis in the statement of Theorem \ref{main} is that $(k^{\ute^u_0},\zeta \cdot k^{\ute^s_0} + \nu)$ is a pair of configurations of the Cantor sets $K^u_0$ and $K^s_0$ that has stable intersections. But these are configurations of the pieces $G(a^u)$ and $G(a^s)$ respectively. As observed below definition \ref{stableint}, the property of stable intersection is preserved by composition of the configuration pair on the left by any $A \in Aff(\R^2)$. Seeing that for small values of $\mu$ the pairs $K_0^u$ and $K_\mu^u$ are close to each other, and also are $K_0^s$ and $K_\mu^s$, the problem is reduced to showing that the configuration pairs
\[
(A_{h^u_{\mu,m}} \circ h^u_{\mu,m}, A_{h^u_{\mu,m}} \circ h^s_{\mu,n}) \qquad \text{and} \qquad (k^{\ute^u_0},\zeta \cdot k^{\ute^s_0} + \nu)
\]
are sufficiently close to each other.

More precisely, by the continuous dependence of the Cantor sets on $\mu,$ if $\mu$ is sufficiently small, there is $\ve > 0 $ such that for every pair $(h,h')$ $\ve$-close to $(k_\mu^{\ul{\theta}^u},\zeta\cdot  k_\mu^{\ul{\theta}^s} + \nu)$, there are stable intersections between $h({K}_\mu^u)$ and $h'({K}_\mu^s)$.

Let  $ \Delta(\mu) \coloneqq h_\mu^{s}(0) - h_\mu^{u}(0)$. Condition (\ref{itm:IV}) on the family $H_\mu$ implies that 
\[
\frac{\partial \Delta}{\partial \mu} (0) = B \in GL(\R^2).
\] 

Remember that because of corollary \ref{confconfig}, the product of matrices\[\left(Dh^u_0(0)\right)^{-1} \cdot Dh^s_0(0)\] is conformal and so can be identified with a complex number. We need to prove the lemma below.

    \begin{lemma}\label{1} Let $\lambda^u_\mu$ and $\lambda^s_\mu$ be the eigenvalues of $DH_\mu$ at $p_\mu$. 
    There is $\delta > 0$ such that, if $m>0$ is sufficiently large, and $n$ satisfy 
    \begin{equation} \label{h1}
        \lv (\lambda^u_0)^m \cdot (\lambda^s_0)^{n} \cdot  \left(Dh^u_0(0)\right)^{-1} \cdot Dh^s_0(0) - \zeta\rv < \delta
    \end{equation}
and $\mu$ (sufficiently small) satisfies
\begin{equation}\label{h2}
    \lv\,Dh^u_0(0 )^{-1}\cdot B \cdot (\lambda^u_0)^m   \cdot \mu - \nu \rv < \delta ,
\end{equation}
then 
\[
(A_{h^u_{\mu,m}} \circ h^u_{\mu,m}, A_{h^u_{\mu,m}} \circ h^s_{\mu,n}) \qquad \text{and} \qquad (k^{\ute^u_0},\zeta \cdot k^{\ute^s_0} + \nu)
\]
are $\ve$-close to each other and so there are stable intersections between $h^u_\mu({K}_\mu^u)$ and $h^s_\mu({K}_\mu^s)$.
    \end{lemma}

\begin{proof}

Following the notation of lemma \ref{scale}, let
\[
\mathfrak{h}^u_{\mu,m} \coloneqq A_{h^u_{\mu,m}} \circ h^u_{\mu,m} \circ \left(k^{\ute^u \ute^u_m}\right)^{-1} \qquad \text{and} \qquad \mathfrak{h}^s_{\mu,n} \coloneqq A_{h^s_{\mu,n}} \circ h^s_{\mu,s} \circ \left(k^{\ute^s \ute^s_n}\right)^{-1}.
\]
Notice that $\ute^u \ute^u_m = \ute^u$ and $\ute^s \ute^s_n = \ute^s$. Then,
\[
(A_{h^u_{\mu,m}} \circ h^u_{\mu,m}, A_{h^u_{\mu,m}} \circ h^s_{\mu,n}) = (\mathfrak{h}^u_{\mu,m}  \circ k^{\ute^u}, A_{h_{\mu,m}^u} \circ A_{h_{\mu,n}^s}^{-1}\circ \mathfrak{h}^s_{\mu,n}  \circ k^{\ute^s})
\]

By lemma \ref{scale}, if $m$ and $n$ are very large, then $\mathfrak{h}^u_{\mu,m}$ and $\mathfrak{h}^s_{\mu,n}$ are very close to the identity. Therefore, it suffices to show that, given any $\ve >0$, there is some $\delta > 0$ such that if equations (\ref{h1}) and (\ref{h2}) are satisfied, then 
\[
A_{h_{\mu,m}^u} \circ A_{h_{\mu,n}^s}^{-1} \qquad \text{and} \qquad \zeta \cdot z + \nu
\]
are $\ve$-close on some large compact disk $R \cdot \D$ contained in $\C$. For this, all one needs is
\begin{equation}\label{i1}
    \lv D \left( A_{h_{\mu,m}^u} \circ A_{h_{\mu,n}^s}^{-1} \right) - \zeta \rv < \frac{\ve}{2R} \qquad \text{and} \qquad  \lv  A_{h_{\mu,m}^u} \circ A_{h_{\mu,n}^s}(0) - \nu \rv < \frac{\ve}{2}.
\end{equation}

First, the map $ A_{h_{\mu,m}^u} \circ A_{h_{\mu,n}^s}^{-1} $ is not necessarily in $Aff(\C)$. However, since the maps $g^u_\mu$ and $g^s_\mu$ are constructed from the dynamics of the automorphism $H_\mu$ (see remark \ref{eigenv}), one has 
\[
Df_{\mu,(a^u, a^u)}(0) = (\lambda^u_\mu)^{-1}\qquad \text{and} \qquad Df_{\mu,(a^s, a^s)}(0)= \lambda^s_\mu.
\]
Considering the definition of $A_h$ for any configuration $h$, we calculate
\begin{align*}
    A_{h_{\mu,m}^u}(z) & = (\lambda^u_\mu)^{m} \left(Dh^u_\mu(0)\right)^{-1} \cdot (z  -h^u_\mu(0)),  \\
    A_{h_{\mu,n}^s}(z) & = (\lambda^s_\mu)^{-n} \left(Dh^s_\mu(0)\right)^{-1} \cdot  (z  -h^s_\mu(0)).
\end{align*}
Hence
\begin{equation*}
\begin{split}
       A_{h_{\mu,m}^u} \circ A_{h_{\mu,n}^s}^{-1} =\;  & (\lambda^u_\mu)^{m} \cdot \left(Dh^u_\mu(0)\right)^{-1} \cdot Dh^s_\mu(0)\cdot (\lambda^s_\mu)^{n} \cdot z\\ &  +  (\lambda^u_\mu)^{m} \cdot \left(Dh^u_\mu(0)\right)^{-1}\cdot  \left[ h^s_\mu(0) - h^u_\mu (0)\right].
\end{split}
\end{equation*}

We now prove that for $\delta $ and $\mu$ sufficiently small 
\begin{equation}\label{i2}
        \lv (\lambda^u_\mu)^{m} \cdot \left(Dh^u_\mu(0)\right)^{-1} \cdot Dh^s_\mu(0)\cdot (\lambda^s_\mu)^{n} - (\lambda^u_0)^m \cdot (\lambda^s_0)^{n} \cdot \left(Dh^u_0(0)\right)^{-1} \cdot Dh^s_0(0) \rv  < \frac{\ve}{4R}
\end{equation}
and 
\begin{equation}\label{i3}
   \lv  (\lambda^u_\mu)^{m} \cdot \left(Dh^u_\mu(0)\right)^{-1} \left[ h^s_\mu(0) - h^u_\mu (0)\right] - Dh^u_0(0 )^{-1}\cdot B \cdot (\lambda^u_0)^m   \cdot \mu  \rv < \frac{\ve}{4},
\end{equation}
and so the inequalities in (\ref{i1}) are satisfied.

If condition (\ref{h2}) is satisfied, then there is some constant $C > 0$ such that $ \lv \mu \rv < C (\lambda^u_0)^{-m} $. Because the eigenvalues depend $C^{r-1}$ on $\mu$, there is some constant $C' > 0$ such that 
\[
\lv \lambda^u_\mu - \lambda^u_0 \rv < C' \lv \mu \rv^{(r-1)} \qquad \text{and} \qquad   \lv \lambda^s_\mu - \lambda^s_0 \rv < C' \lv \mu \rv^{(r-1)}.
\]
Hence, 
\[
\left(1- CC' (\lambda_0^u)^{1-m(r-1)} \right)^m < \lv \frac{(\lambda^u_\mu)^m } {(\lambda^u_0)^m}  \rv < \left(1+ CC' (\lambda_0^u)^{1-m(r-1)} \right)^m.
\]
But the lower and upper bounds converge to $1$ as $m \rightarrow +\infty$. Analogously, we can conclude that if $n$ is sufficiently large, then 
\[
\frac{(\lambda^s_\mu)^n } {(\lambda^s_0)^n}
\]
is also very close to $1$. 

At the same time, if $m$ is sufficiently large, then $\mu$ is very small and so
\[
\left(Dh^u_\mu(0)\right)^{-1} \cdot Dh^s_\mu(0) \qquad \text{and} \qquad  \left(Dh^u_0(0)\right)^{-1} \cdot Dh^s_0(0)
\]
are very close to each other. We conclude that if $m$ and $n$ are sufficiently large, inequality (\ref{i2}) is satisfied. 

Meanwhile, $h^s_\mu(0) - h^u_\mu (0) = \Delta(\mu)$ and so
\[
\lv h^s_\mu(0) - h^u_\mu (0) - B \mu \rv < C'' \lv\mu \rv^r
\]
for some constant $C'' > 0$. Proceeding as above, we conclude that inequality (\ref{i3}) is valid as well when $m$ is sufficiently large. This concludes the proof.
\end{proof}

By corollary \ref{confconfig}, we can set
    \[
    \zeta \coloneqq \left(Dh^u_0(0)\right)^{-1} \cdot Dh^s_0(0)\in \C.
    \]
If we make $z = \lambda^u_0$ and $w = \lambda^s_0$, Corollary \ref{coro} gives us a large $M$ and a sequence of integers $(m_i,n_i)_{i \in \N}$ with $m_i>0$ sufficiently large and $0 < m_{i+i} - m_i < M$ such that the condition (\ref{h1}) of Lemma \ref{1} is satisfied. There is a constant $C'''> 0$ such that condition (\ref{h2}) of Lemma \ref{1} is satisfied whenever $\mu$ belong to the ball of radius $r_i = C'''(\lambda^u_0)^{-m_i}$ centered at 
\[
z_i = (\lambda_0^u)^{ -m_i} \cdot B^{-1} \cdot Dh^u_0(0) \cdot \nu.
\]
Now, fix $\rho > 0$. Restricting to a subsequence and enlarging $M$ if necessary, we can assume all those balls are disjoint and 
\[
\rho (\lambda^u_0)^{-M} < \lv z_0 \rv +  r_0  < \rho.
\]  
This implies $(\lambda_0^u)^{-m_0} > \frac{\rho (\lambda^u_0)^{-M}}{C'''}$. 
Hence
    \begin{align*}
    m(C_{pers}\cap \rho \cdot \D) & \geq\sum_{i \in \N} \pi r_i^2\\
    & \geq(C''' )^2 \sum_{i \in \N} (\lambda^u_0)^{-2m_i}\\
    & \geq (C''' )^2 \sum_{i \in \N} (\lambda^u_0)^{-2m_0 - Mi} \\
    & \gtrsim (\lambda^u_0)^{-2m_0} \gtrsim \rho^2,
\end{align*}
and so $C_{pers}$ has positive inferior density at $\mu = 0$.
\end{proof}

\begin{remark}\label{final}
    From the proof of Theorem \ref{main}, we conclude that if $\lambda_0^u$ and $\lambda_0^s$ satisfy the generic condition of Lemma \ref{genericv}, then there is no need to set $\zeta \coloneqq \left(Dh^u_0(0)\right)^{-1} \cdot Dh^s_0(0) \in \C$. In this case, because of Lemma \ref{genericv}, if $(k^{\ute^u},\zeta \cdot  k^{\ute^s} + \nu)$ has stable intersections for some complex values $\zeta$ and $\nu$, then the set $C_{pers}$ has positive inferior density at $\mu = 0$.
\end{remark}

\section*{Appendix}\label{appendix}

The hypothesis in Theorem \ref{main} may appear very specific to some readers, so we provide some circumstances in which they hold or may hold. In the aforementioned work \cite{Buzz}, Buzzard constructs families of automorphisms which unfold homoclinic intersections associated to horseshoes close a ``very thick'' piecewise linear model. It was later shown in \cite{araujo_moreira_2023.1} (section 4), with the aid of the \emph{recurrent compact criterion}, that the linearized versions of the Cantor sets arising in this context have stable intersections.

Furthermore, in \cite{amz}, the authors prove that stable intersections of the linearized Cantor sets hold generically under the hypothesis that the sum of the Hausdorff dimensions of those sets is larger than $2$. However, there is a caveat: the perturbations done there are executed on the map defining the conformal Cantor sets, not on the dynamics (the automorphisms) generating then (and the horseshoe they are associated with). It is not well understood yet how perturbations on the horseshoe impact the Cantor sets. Currently, the authors of \cite{amz} are working on a version of this result for pertubations on the dyanmics. 

Now we prove that any homoclinic tangency can be perturbed to a quadratic one and be unfolded by a parameterized family as described in section \ref{sec3}.

\begin{lemma}
Let $H$ be an automorphism of $\C^2$ with a fixed point $p$ such that $W^u(p)$ and $W^s(p)$ are tangent to each other at the point $q$. Then, arbitrarily close to $H$, there are an automorphism $H_0 \in Aut(\C^2)$ and a family $\{H_\mu\}_{\mu \in \D}$ of automorphisms unfolding a generic homoclinic tangency, that is, satisfying conditions (\ref{itm:I}) - (\ref{itm:IV}) appearing in section \ref{sec3}. Further, the homoclinic tangency of $H_0$ occurs close to the point $q$, the homoclinic tangency of $H$.
\end{lemma}

\begin{proof}
    By a change of coordinates, we can suppose that $q = (0,0)$ and that $W^u(q)$ and $W^s(q)$ are both tangent to the $z$-direction at $q$.

    For small $\ve > 0$, let $V_0^u$ (resp. $V^s_0$) be the connected component of $W^u (q) \cap (D_\ve(0) \times \C)$ (resp. $W^s (q) \cap (D_\ve(0) \times \C)$) containing $q$. If $\ve$ is sufficiently small, then $V^u_0$ (resp. $V^s_0$) is the graph of a function $f^u \text{ (resp. }f^s): D_\ve(0) \to \C $. We have $(f^u)'(0) = (f^s)'(0) = 0$. We consider only the case in which $(f^u)''(0) = (f^s)''(0)$, the other one ($(f^u)''(0) \neq (f^s)''(0)$) follows from the same arguments presented below.

    For $-k^u \le k \le - 1$, let $V^u_{k}$ be small open disks contained in $W^u(p)$ such that 
    \begin{itemize}\setlength\itemsep{.5em}
        \item $V^u_k \cap W^u_\ve(p) = \emptyset$;
        \item $H(V^u_k) \supset\overline{ V^u_{k+1}}$, for $-k^u \le k \le - 1$;
        \item $ H^{-1}(\overline{V^u_{-k^u}}) \subset W^u_\ve(p)$.
    \end{itemize}

    Analogously, for $1 \le k \le k^s$, let $V^s_{k}$ be small open disks contained in $W^s(p)$ such that 
    \begin{itemize}\setlength\itemsep{.5em}
        \item $V^s_k \cap W^s_\ve(p) = \emptyset$;
        \item $H^{-1}(V^s_k) \supset\overline{ V^s_{k-1}}$, for $-k^u \le k \le - 1$;
        \item $ H(\overline{V^s_{k^s}}) \subset W^s_\ve(p)$.
    \end{itemize}
    Let 
    \[
    U = W^u_\ve(p) \cup W^s_\ve(p) \cup \bigcup_{k\neq 0} V^u_k \cup \bigcup_{k\neq 0} V^s_k. 
    \]
    For any $\alpha \in \C$, let $\pi_\alpha$ be the projection of $\C^2$ onto the complex line $l_{\alpha} = \{(z,\alpha \cdot z), \; z \in \C\}$. By shrinking $\ve$ if necessary and adjusting the open disks $V ^u_{k}$ and $V^s_{k}$, we can find some $\alpha \in \C$ such that $\pi_\al(\overline{V^u_0})$ and $\pi_\al(\overline{V^s_0})$  are not intersected by $\pi_\al(\overline{U})$.

    Therefore, making a linear change of coordinates, we can consider that $\Pi_1(\overline{V^u_0}) $ and $\Pi_1(\overline{V^s_0})$ are not intersected by $\Pi_1(\overline{U})$, where $\Pi_1$ is the projection onto the first coordinate. But now $V^u_0$ and $V^s_0$ are not tangent to the $z$-direction at $(0,0)$ anymore. Nonetheless, we can consider that sufficiently close to the origin, $V^u_0$ and $V^s_0$ are both given by the graphs of two holomorphic functions
    \[
    f_{u}: D_{\ve/2}(0) \to \C \qquad \text{and} \qquad  f_{s}: D_{\ve/2}(0) \to \C
    \]
    such that ${f_u}(0) = {f_s}(0) = 0$ and ${f_u}'(0) = {f_s}'(0)$. 
    
     Shrinking $\ve$ and adjusting the disks $V^u_k$ and $V^s_k$ again if necessary, by Runge's Theorem, there are complex polynomials $p(z)$ and $q(z)$ such that
   \begin{itemize}
       \item $\lV p(z) - z^2\rV_{C^2} < \ve^9$ on $D_{\ve}(0)$;
       \item $\lV q(z) - 1\rV_{C^2} < \ve^9$ on $D_{\ve}(0)$;
       \item $\lV p(z) \rV_{C^2} < \ve^9$ and $ \lV q(z) \rV_{C^2} < \ve^9$ on a neighborhood of  $\Pi_1(\overline{U})$.
   \end{itemize}
   Consider now the automorphisms $\Psi_{\delta,\mu} : \C^2 \to \C^2$ with $\delta >0$ and $\mu \in \D$ such that 
   \[
   \Psi_{\delta,\mu}(z,w) = (z,w + \delta \cdot\left(p(z) + q(z) \mu\right)). 
   \]
By making $\delta$ small, we get automorphisms close to the identity.

Now, let $H_{\delta,\mu} \coloneqq \Psi_{\delta,\mu} \circ H$ and $p_{\delta,\mu}$ denote the continuation of the fixed saddle point (of $H$) $p$. For simplicity, we will denote the unstable and stable manifolds of $p_{\delta,\mu}$ with respect to the automorphism $H_{\delta,\mu}$ by $ W^{u}(p_{\delta, \mu})$ and $ W^{s}(p_{\delta, \mu})$ respectively.

The difference between $H_{\delta,\mu}$ and $H$ on $ \Pi_1(\overline{U})\times \C$ is of order $\delta\ve^9$. This implies, in particular, that the distance between $W^u_\ve(p)$ (resp. $W^s_\ve(p)$) and $W^u_\ve(p_{\delta,\mu})$ (resp. $W^s_\ve(p_{\delta,\mu})$) is of order $\delta\ve^9$. Iterating by $H_{\delta,\mu}$ we conclude that the continuations of the disks $V^u_0$ and $V^s_0$ over $D_\ve(0)$ are given by maps
 \[
    f_{u,\delta,\mu}: D_{\ve/2}(0) \to \C \qquad \text{and} \qquad  f_{s,\delta,\mu}: D_{\ve/2}(0) \to \C
    \]
with 
\[
f_{u,\delta,\mu} \approx \delta (z^2 + \mu) \qquad \text{and} \qquad  f_{s,\delta,\mu} \approx 0,
\]
and these approximations errors are of order $\delta\ve^9$. 

We now prove that there is some small $\mu$ for which there is a point $z_\mu$ such that
\[
f_{u,\delta,\mu}(z_\mu) = f_{s,\delta,\mu}(z_\mu)  \qquad \text{and} \qquad f'_{u,\delta,\mu}(z_\mu) = f'_{s,\delta,\mu}(z_\mu).
\]

For a fixed $\delta$, let $l_\mu(z) \coloneqq f_{u,\delta,\mu}(z) - f_{s,\delta,\mu}(z) $. If $\lv \mu \rv$ is sufficiently small in relation to $\ve^2$, Rouché's theorem implies there are two roots $q^1_\mu$ and $q^2_\mu$ of $l_\mu(z) = 0$ inside $D_\ve(0)$ (it can be $q^1_\mu = q^2_\mu$). Consider now the product
\[
k(\mu)=\frac{\partial l_\mu}{\partial z}(q^1_\mu) \cdot \frac{\partial l_\mu}{\partial z}(q^2_\mu).
\]

If $\lv \mu \rv$ is of order $\ve^3$, then $\lv k(\mu) - 4\mu\rv $ is very small in relation to $\mu$. Therefore, the map
\begin{align*}
     \overline{k}: \partial (\ve^3 \cdot \D) & \to \partial \D\\
    \mu & \mapsto \frac{k(\mu)}{\lv k(\mu) \rv}
\end{align*}
has degree one and so there is ${\und{\mu}}$ such that $\overline{k}({\und{\mu}}) = 0 = k({\und{\mu}}) $ (otherwise, $\mathbb{S}^1$ would be a retraction of the unit disk $\D$). Since $k({\und{\mu}}) = 0$, then $q^1_{\und{\mu}} = q^2_{\und{\mu}}$, as we have a double root, and we can make $z_{\und{\mu}} = q^1_{\und{\mu}} = q^2_{\und{\mu}}$. 

Let $w_{\und{\mu}} \coloneqq f_{u,\delta,{\und{\mu}}}(z_{\und{\mu}}) = f_{s,\delta,{\und{\mu}}}(z_{\und{\mu}})$. The point $(z_{\und{\mu}}, w_{\und{\mu}})$ corresponds to a tangency between $W^{u}(p_{\delta, {\und{\mu}}})$ and $ W^{s}(p_{\delta, {\und{\mu}}})$. It is quadratic because, by our approximations, we  have $f''_{u,\delta,{\und{\mu}}}(z_{\und{\mu}}) \approx 2\delta \neq 0 \approx f''_{s,\delta,{\und{\mu}}}(z_{\und{\mu}}) $. By changing the parametrization of the family by $\mu \mapsto \mu - \und{\mu}$, the family $H_{\delta,\mu}$ satisfy conditions \ref{itm:III} and \ref{itm:IV} stated in section \ref{sec3} as desired.

\end{proof}

\bibliographystyle{abbrv}
\bibliography{refs.bib}

\end{document}